\theoremstyle{remark}
\newtheorem{remark}{Remark}[section]
\newtheorem*{remark 2}{Historical remark}
\newtheoremstyle{mytheorem}
  {3pt}
  {3pt}
  {\itshape}
  {}
  {\bfseries}
  {.}
  {1em}
  {}
\theoremstyle{mytheorem}
\newtheorem{theorem}[remark]{Theorem}
\newtheorem{theoremA}{Theorem}
\newtheorem{theoremB}{Theorem}
\newtheorem{theoremC}{Theorem}
\newtheorem{theoremD}{Theorem}
\newtheorem{definition}[remark]{Definition}
\newtheorem*{definition 2}{Definition (Gromov)}
\newtheorem{proposition}[remark]{Proposition}
\newtheorem{property}[remark]{Property}
\newtheorem{lemma}[remark]{Lemma}
\newtheorem{corollary}[remark]{Corollary}
\newtheorem{question}[remark]{Question}
\newtheorem{example}[remark]{Example}
\begin{document}
\title{Enlargeable Length-structures and Scalar Curvatures}
\author{Jialong Deng}
\date{}
\newcommand{\Addresses}{{
  \bigskip
  \footnotesize

  \textsc{Mathematisches Institut, Georg-August-Universit{\"a}t, G{\"o}ttingen, Germany}\par\nopagebreak
  \textit{E-mail address}: \texttt{jialong.deng@mathematik.uni-goettingen.de}}}
\maketitle
\begin{abstract}
  We define  enlargeable length-structures on closed topological manifolds and then  show that the connected sum of a closed $n$-manifold  with an enlargeable Riemannian length-structure  with an arbitrary closed smooth manifold carries no Riemannian metrics with positive scalar curvature.  We show that  closed smooth manifolds  with a locally CAT(0)-metric which is strongly equivalent to a Riemannian metric are examples of closed manifolds  with an enlargeable Riemannian length-structure. Moreover, the result is correct in arbitrary dimensions based on the main result of a recent paper by Schoen and Yau. 
  
   We define the positive  $MV$-scalar curvature  on  closed orientable topological manifolds and show the compactly enlargeable length-structures are the obstructions of its  existence.
\end{abstract}
\par 
\section{Introduction}
 \qquad The search for obstructions to the existence of a Riemannian metric with positive scalar curvature (PSC-metric) on a closed non-simply connected smooth manifold is an ongoing project. Based on index theory methods, Gromov and Lawson introduced enlargeability as an obstruction  based on index theory in \cite{MR569070}. Later, they relaxed the spin assumption in dimensions less than 8 in \cite[section~12]{MR720933}. Recently, Schoen and Yau showed that  the manifold  $N^n \# T^n$ carries no PSC-metrics  by geometric measure theory and minimal surfaces  methods in \cite{2017arXiv170405490S}, where $N^n$ is any closed oriented smooth $n$-manifold and $T^n$ is a torus.  Then, Cecchini and Schick  used Schoen and Yau's results  to show that a closed enlargeable manifold cannot carry any PSC-metric in \cite{MR4232210}. 
 
 Both enlargeabilities mentioned above are defined on Riemannian metrics and need at least $C^1$-smoothness for the maps. Here an enlargeable length-structure will be defined for   length metric spaces and  the maps only require to be continuous. Combining enlargeable length-structures with Gromov's Spherical Lipschitz Bound Theorem (SLB Theorem) \cite{MR3816521},  a new obstruction to the existence of PSC-metrics and positive  $MV$-scalar curvature $Sc^{MV}$ on a  closed manifold is given.  Details will be given later. 

\begin{theoremA}\label{A}
  Let  $X^n$  $(2\leq n \leq 8)$ be a closed orientable smooth manifold, then $X^n$ carries no PSC-metrics in its enlargeable length-structures. 
\end{theoremA}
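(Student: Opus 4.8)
The plan is to argue by contradiction, playing the defining feature of an enlargeable length-structure---the existence of arbitrarily contracting, nonzero-degree maps to spheres on suitable coverings---against Gromov's Spherical Lipschitz Bound Theorem \cite{MR3816521}. Suppose $g$ is a Riemannian metric lying in the enlargeable length-structure of $X^n$, so that its induced length metric $d_g$ is strongly equivalent to the structure's length metric $d$, and assume toward a contradiction that $\mathrm{Sc}(g) > 0$. Since $X^n$ is closed, compactness yields a uniform lower bound $\mathrm{Sc}(g) \geq \sigma > 0$, and after a single rescaling of $g$ (independent of everything below) we may normalize so that $\sigma = n(n-1)$, matching the scalar curvature of the unit sphere $S^n$.

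The core of the proof is to convert the contracting continuous maps furnished by enlargeability into smooth Lipschitz maps to which the SLB Theorem applies. Fix $\epsilon > 0$. By enlargeability of $(X^n, d)$ there is a covering $\pi \colon \widehat{X} \to X^n$ and a continuous map $f \colon \widehat{X} \to S^n$ that is constant outside a compact set, has nonzero degree, and is $\epsilon$-contracting with respect to $d$. Because $d$ and $d_g$ are strongly equivalent with constants independent of $\epsilon$, the lift $f$ is $C\epsilon$-Lipschitz for the pulled-back metric $\pi^{*}g$, where $C$ depends only on the equivalence constants. I would then mollify $f$ away from the region where it is already locally constant, using a standard approximation that is homotopic to $f$ (hence of the same degree) and enlarges the Lipschitz constant by at most a universal factor, producing a smooth map $\widehat{f} \colon \widehat{X} \to S^n$ with $\mathrm{Lip}(\widehat{f}) \leq C'\epsilon$, $\deg(\widehat{f}) \neq 0$, and $\widehat{f}$ still constant at infinity.

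To conclude, I would apply the SLB Theorem on $\widehat{X}$. The covering map $\pi$ is a local isometry, so $\pi^{*}g$ is complete with $\mathrm{Sc}(\pi^{*}g) \geq n(n-1)$; since $\widehat{f}$ is constant at infinity and has nonzero degree, the theorem supplies a lower bound $\mathrm{Lip}(\widehat{f}) \geq \ell_n > 0$ depending only on $n$ and valid precisely in the range $2 \leq n \leq 8$. Combining this with the upper estimate $\mathrm{Lip}(\widehat{f}) \leq C'\epsilon$ and choosing $\epsilon < \ell_n / C'$ forces a contradiction. Hence $g$ cannot have positive scalar curvature, and no PSC-metric can lie in the enlargeable length-structure of $X^n$, proving Theorem~\ref{A}.

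The step I expect to be the main obstacle is the bridge in the middle paragraph together with the applicability of the bound on a non-compact covering: one must pass from merely continuous maps contracting the length metric $d$ to smooth maps contracting the Riemannian metric $g$, keeping the Lipschitz constant a universal multiple of $\epsilon$, the degree nonzero, and the constant-at-infinity condition intact on the possibly non-compact space $\widehat{X}$, and one must ensure the SLB Theorem is invoked in the form that handles complete manifolds with maps constant at infinity. The strong equivalence of $d$ and $d_g$ is exactly what makes the quantitative transfer harmless, while the dimensional restriction $2 \leq n \leq 8$ enters only through the validity of the SLB Theorem, which ultimately rests on the Schoen--Yau minimal hypersurface regularity used in \cite{2017arXiv170405490S}.
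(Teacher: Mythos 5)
Your proposal is correct and follows essentially the same argument as the paper: contradict $\varepsilon$-hypersphericity of a covering by transferring Lipschitz bounds between $d$ and $d_g$ via strong equivalence (with constants inherited by the lifted metrics) and invoking Gromov's SLB Theorem after normalizing $\mathrm{Sc}(g)\geq n(n-1)$. The one point where you do extra work is the mollification step: the SLB Theorem as stated in the paper already applies to \emph{continuous} maps that are constant at infinity and of non-zero degree, so the obstacle you flag---passing to smooth maps---does not arise, and the paper applies the theorem directly to the continuous map furnished by enlargeability.
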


\begin{theoremB}
Let $N^n$ $(2 \leq n \leq 8)$ be an arbitrary closed oriented smooth $n$-manifold and $M^n$ be a closed manifold with an enlargeable Riemannian length-structure. Then $N^n \# M^n$ does not admit a PSC-metric.
\end{theoremB}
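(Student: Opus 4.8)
The plan is to reduce Theorem~B to Theorem~A by showing that the connected sum $N^n \# M^n$ itself carries a natural enlargeable length-structure, at which point the non-existence of a PSC-metric follows immediately from Theorem~A. The key geometric observation is that enlargeability, whether in the smooth or the length-structure setting, is insensitive to surgeries in codimension $\geq 3$: a connected sum $N^n \# M^n$ (for $n \geq 3$) is obtained from the disjoint union by removing a small ball from each summand and gluing along the resulting $S^{n-1}$-boundary, which is a surgery of codimension $n \geq 3$. First I would recall the precise definition of an enlargeable Riemannian length-structure on $M^n$: it provides, for every $\varepsilon > 0$, a cover $\widetilde{M}_\varepsilon$ together with an $\varepsilon$-contracting continuous map to $S^n$ (with respect to the pulled-back length metric) that is of non-zero degree and locally constant near infinity. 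The goal is to manufacture analogous maps on an appropriate cover of the connected sum.

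Next I would carry out the transfer of structure. Fix a PSC-metric $g$ on $N^n \# M^n$, assume for contradiction it exists, and observe that since $M^n$ is enlargeable we may pull back the $\varepsilon$-contracting maps along the collapsing map $c\colon N^n \# M^n \to M^n$ that crushes the $N^n$-summand (minus a ball) to a point. The map $c$ has degree one onto $M^n$ in the relevant sense, so composing Gromov--Lawson's $\varepsilon$-contracting maps $f_\varepsilon\colon \widetilde{M}_\varepsilon \to S^n$ with the lift $\widetilde{c}$ of $c$ to the induced cover of $N^n \# M^n$ yields maps $f_\varepsilon \circ \widetilde{c}$ that are still $\varepsilon$-contracting (composition with a $1$-Lipschitz collapse does not increase the Lipschitz constant) and retain non-zero degree, because $c$ is degree one and $f_\varepsilon$ has non-zero degree. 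This exhibits the enlargeable length-structure on $N^n \# M^n$ directly, and then Theorem~A, applied in the dimension range $2 \leq n \leq 8$, forbids the PSC-metric $g$, giving the contradiction.

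The main obstacle I anticipate is ensuring that the collapsing map $c$ is genuinely $1$-Lipschitz (or at least uniformly Lipschitz, with the constant absorbable into $\varepsilon$) with respect to the length-structures in play, and that its degree-one property interacts correctly with the covers. The subtlety is that $c$ is only continuous a priori, and the connected-sum metric near the gluing neck need not relate isometrically to the metric on $M^n$; one must choose the length-metric on $N^n \# M^n$ so that the neck is short and the collapse of the $N^n$-part is distance-nonincreasing onto the image. I would handle this by working with the length-structure strongly equivalent to $g$ and rescaling the neck region, invoking that strong equivalence only changes Lipschitz constants by a bounded factor, which can be compensated by shrinking $\varepsilon$. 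The degree computation on the non-compact covers requires care that the maps remain locally constant (hence proper with well-defined degree) near infinity; this is inherited from the corresponding property for $M^n$ since $\widetilde{c}$ is proper and the cover of $N^n \# M^n$ is pulled back from that of $M^n$.

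Alternatively, and perhaps more cleanly, one could bypass the collapsing-map bookkeeping entirely by appealing to the definition of \emph{enlargeable Riemannian length-structure} as a property stable under connected sum with arbitrary closed manifolds, which is precisely the content the abstract advertises; in that case Theorem~B is the assertion that $N^n \# M^n$ lies in the enlargeable class and Theorem~A closes the argument. I would present the collapsing-map construction as the proof that enlargeability is preserved under connected sum, since that is the step carrying the real mathematical content.
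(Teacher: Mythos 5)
Your proposal is correct and takes essentially the same route as the paper: you transfer enlargeability to $N^n \# M^n$ via the degree-one Lipschitz collapse map crushing the $N^n$-summand, which is exactly the content of Property~\ref{Enlargabel metric}(1) and Corollary~\ref{Enlargeable Riemannn}, and then invoke the strong equivalence of all Riemannian length metrics on a closed smooth manifold to apply Theorem~\ref{A} to any hypothetical PSC-metric $g$. Your anticipated obstacle about making the collapse map genuinely $1$-Lipschitz is a non-issue, as you yourself note: any fixed Lipschitz constant is absorbed by shrinking $\varepsilon$, and the codimension-$\geq 3$ surgery remark is not needed for the argument.
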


\begin{remark}
In fact,  $N^n \# M^n$ does not admit a complete uniformly PSC-metric for any oriented manifold $N^n$ $(2 \leq n \leq 8)$.
\end{remark}

 \begin{theoremC}\label{C}
  If $(X^n, d, \mathcal{H}^n_d)$ $(n\leq 8)$ satisfies  $Sc^{MV}(X^n)\geq \kappa>0 $, then $d$ is not in any  compactly enlargeable  length-structures on the closed oriented topological manifold  $X$.
 \end{theoremC}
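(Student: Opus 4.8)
The plan is to argue by contradiction, combining the defining property of compact enlargeability with Gromov's Spherical Lipschitz Bound Theorem \cite{MR3816521}. Suppose that $(X^n, d, \mathcal{H}^n_d)$ satisfies $Sc^{MV}(X^n)\geq \kappa>0$ while nevertheless $d$ belongs to some compactly enlargeable length-structure on the closed oriented topological manifold $X$. By the very definition of the latter, for every $\varepsilon>0$ there is a compact (hence finite-sheeted) cover $\pi_\varepsilon\colon \widetilde{X}_\varepsilon\to X$ together with a continuous $\varepsilon$-contracting map $f_\varepsilon\colon (\widetilde{X}_\varepsilon,\pi_\varepsilon^\ast d)\to S^n$ onto the unit round sphere which has nonzero degree. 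The crucial feature is that $\varepsilon$ may be taken arbitrarily small, so these maps contract distances as strongly as we wish while remaining topologically nontrivial; and since $\widetilde{X}_\varepsilon$ is closed, no behaviour ``at infinity'' needs to be controlled.

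First I would check that the synthetic curvature hypothesis is inherited by the cover. Because $Sc^{MV}$ is built from the local volume/measure comparison encoded in the pair $(d,\mathcal{H}^n_d)$, it is a local invariant of the length-structure; the pulled-back data $(\widetilde{X}_\varepsilon,\pi_\varepsilon^\ast d,\mathcal{H}^n_{\pi_\varepsilon^\ast d})$ is locally isometric to $(X,d,\mathcal{H}^n_d)$ via $\pi_\varepsilon$, so it again satisfies $Sc^{MV}\geq \kappa>0$. Here one must verify that the covering is realised inside the same length-structure and that the Hausdorff measure lifts compatibly, so that the bound $\kappa$ is genuinely preserved and not merely an upper semicontinuous shadow of it.

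With the lower bound $Sc^{MV}(\widetilde{X}_\varepsilon)\geq\kappa$ in hand on a closed space, I would feed $f_\varepsilon$ into the SLB Theorem. That theorem asserts that, in the presence of $Sc^{MV}\geq\kappa>0$, a degree-nonzero map to the unit sphere cannot contract distances below a threshold depending only on $n$ and $\kappa$; concretely it yields a universal bound $\mathrm{Lip}(f_\varepsilon)\geq c(n,\kappa)>0$. Choosing $\varepsilon<c(n,\kappa)$ contradicts the existence of the $\varepsilon$-contracting maps supplied by compact enlargeability, and this contradiction proves the theorem. The dimension restriction $n\leq 8$ enters here precisely as the range in which the underlying $\mu$-bubble/minimal-hypersurface mechanism behind the SLB Theorem is available, matching the hypotheses of Theorems~\ref{A} and~\ref{C}.

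The step I expect to be the main obstacle is the precise matching of hypotheses in the second and third steps: one must ensure that the synthetic condition $Sc^{MV}\geq\kappa$ on the possibly merely length-metric space $(X,d,\mathcal{H}^n_d)$ is exactly the input the SLB mechanism consumes, without covertly using smoothness of $d$. If the SLB Theorem is only at our disposal for genuinely Riemannian metrics, the remedy would be to approximate $(X,d)$ by Riemannian length-structures strongly equivalent to $d$, apply the smooth SLB bound to each approximant, and pass to the limit; the delicate point is then to confirm that both the contracting constant of $f_\varepsilon$ and the lower bound on $Sc^{MV}$ survive this limit, so that the threshold $c(n,\kappa)$ remains uniform.
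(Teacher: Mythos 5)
Your overall skeleton matches the paper's: argue by contradiction, pass to the finite cover where $Sc^{MV}\geq\kappa$ persists, and play a uniform Lipschitz lower bound against the $\varepsilon$-hypersphericity. But the decisive step is missing, and your proposed fallback for it would fail. Gromov's SLB Theorem is a statement about (possibly incomplete) \emph{Riemannian} manifolds; it does not consume the synthetic hypothesis $Sc^{MV}\geq\kappa$ on a length metric space. The paper bridges this gap with the Weak SLB Theorem (Proposition \ref{Weak SLB}), whose proof exploits condition (1) in the definition of $Sc^{MV}$: by the definition of $Sc^{max}$, for every $\epsilon>0$ there is a closed orientable Riemannian manifold $(M^n,g)$ with $Sc(g)\geq\kappa-\epsilon$ and a $1$-Lipschitz degree-one map $f:(M^n,d_g)\to(X^n,d)$; one then bounds $\mathrm{Lip}(h)\geq\mathrm{Lip}(h\circ f)$ and applies the smooth SLB Theorem to the rescaled composition $h\circ f$, which lives on an honest Riemannian manifold. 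This precomposition trick is exactly what the definition of $Sc^{max}$ is designed to enable. Your remedy --- approximate $(X,d)$ by Riemannian length-structures strongly equivalent to $d$ and pass to the limit --- cannot work: $X$ is only a topological manifold (it may not even be smoothable), and strong equivalence of length metrics carries no scalar-curvature information at all, so there is no approximation scheme along which a threshold $c(n,\kappa)$ would stay uniform.

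A second, related flaw is your justification that the cover inherits $Sc^{MV}\geq\kappa$ because it is ``a local invariant.'' Condition (2) (the volume comparison with $S^2(R)\times\mathbf{R}^{n-2}$) is indeed local and lifts along the local isometry $\pi$, but condition (1), $Sc^{max}([X^n])\geq\kappa$, is a \emph{global} homological statement about the existence of a $1$-Lipschitz degree-one map from a closed Riemannian manifold hitting the fundamental class; locality does not transfer it. The paper's Proposition \ref{cover} proves the inheritance by pulling the finite cover back along $f$ to get $\hat{M}^n:=f^{*}\hat{X}^n$, lifting $f$ to a $1$-Lipschitz map $\hat{f}$, and transporting fundamental classes via wrong-way maps --- again the argument runs through the Riemannian manifold furnished by the definition, not through locality. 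A final small point: a metric $d$ in a compactly enlargeable length-structure need not itself be enlargeable ``by the very definition''; the structure is the strong-equivalence class of some enlargeable $d'$, and one must either note that enlargeability is preserved under strong equivalence (via Property \ref{Enlargabel metric}(1)) or, as the paper does, carry the constants $\alpha_1 d\leq d'\leq\alpha_2 d$ through the Lipschitz estimates explicitly.
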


 Based on Schoen and Yau's argument \cite{2017arXiv170405490S}, Gromov proved the SLB Theorem \cite[Section~3]{MR3816521} for any dimensions. Similarly, another consequence of \cite{2017arXiv170405490S} is that  theorems A and B are also true in higher dimensions.

\begin{theoremD}\label{B'}
If $N^n$ is an arbitrary closed oriented smooth $n$-manifold and $X^n$ is  a closed manifold with an enlargeable Riemannian length-structure, then $N^n \# X^n$ carries no PSC-metrics. 

In particular, if $M^n$ is a closed manifold with an enlargeable Riemannian length-structure containing  a  locally CAT(0)-metric, then $N^n \# M^n$ does not admit a PSC-metric.
\end{theoremD}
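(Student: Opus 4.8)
The plan is to reduce Theorem~B$'$ to the single–manifold obstruction furnished by Gromov's SLB Theorem, transporting enlargeability from $X^n$ across a degree–one collapse map, and to remove the restriction $n\leq 8$ by invoking the SLB Theorem in its full generality, which rests on Schoen and Yau's resolution of the singularities of area–minimizing hypersurfaces. Assume for contradiction that $W^n:=N^n\#X^n$ admits a PSC-metric $g$ with scalar curvature $Sc_g\geq\kappa>0$. Since $X^n$ carries an enlargeable Riemannian length-structure, fix a Riemannian metric $g_X$ in that structure and, for every $\epsilon>0$, a (possibly non-compact) covering $p_\epsilon\colon\widehat{X}\to X$ together with an $\epsilon$-contracting map $h_\epsilon\colon(\widehat{X},p_\epsilon^{*}g_X)\to S^n(1)$ of non-zero degree that is constant near infinity.

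Next I would produce the collapse map: there is a smooth degree–one map $\phi\colon W^n\to X^n$ obtained by collapsing the $N^n$-summand (the complement of the distinguished disk inherited from $X^n$) to the base point; being smooth between closed manifolds, $\phi$ is $L$-Lipschitz from $(W,g)$ to $(X,g_X)$ for some finite $L$. For each $\epsilon$ I pull back the covering, setting $\widehat{W}:=\phi^{*}\widehat{X}$; this is the covering of $W$ associated to $\ker\bigl(\phi_{*}\colon\pi_1(W)\to\pi_1(X)\bigr)$, whose deck group is $\operatorname{im}\phi_{*}=\pi_1(X)$, and it carries a canonical lift $\hat\phi\colon\widehat{W}\to\widehat{X}$ covering $\phi$. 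Because covering projections are local isometries, $\hat\phi$ is still $L$-Lipschitz for the lifted metrics; because pulling back a covering preserves the degree of the covered map, $\hat\phi$ again has degree one; and because the two deck groups coincide while $W,X$ are compact, $\hat\phi$ is proper and hence respects the behaviour near infinity. Consequently $h_\epsilon\circ\hat\phi\colon\widehat{W}\to S^n(1)$ is $(L\epsilon)$-contracting, of non-zero degree, and constant near infinity.

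Letting $\epsilon\to 0$ then yields, on coverings of the PSC-manifold $(W,g)$, maps to $S^n(1)$ of non-zero degree whose Lipschitz constants tend to $0$. The SLB Theorem applied to $(W,g)$ forbids this: when $Sc_g\geq\kappa>0$, every such admissible map of non-zero degree has Lipschitz constant bounded below by a positive constant depending only on $n$ and $\kappa$. This contradiction shows that $W^n$ carries no PSC-metric. The only role of the range $n\leq 8$ in Theorem~B was the availability of the SLB Theorem; since its higher–dimensional form (Gromov, after Schoen–Yau) holds with no dimensional restriction, the same argument proves Theorem~B$'$ in every dimension. Finally, the ``in particular'' clause is immediate from the fact recalled in the introduction: a closed smooth manifold carrying a locally CAT(0)-metric strongly equivalent to a Riemannian metric possesses an enlargeable Riemannian length-structure, and may therefore be taken as the manifold $X^n$ above.

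I expect the main obstacle to lie in the covering bookkeeping of the middle step. Because $\phi_{*}$ kills the free factor $\pi_1(N)$ of $\pi_1(N)*\pi_1(X)$, one must verify that $\widehat{W}=\phi^{*}\widehat{X}$ is indeed the appropriate covering, that the lifted collapse $\hat\phi$ retains a well-defined non-zero degree and properness even though $\widehat{W}$ is non-compact, and that the normalization ``constant near infinity'' passes correctly through the composition, so that the SLB Theorem applies to $h_\epsilon\circ\hat\phi$ verbatim.
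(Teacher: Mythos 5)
Your proposal is correct and follows essentially the same route as the paper, which simply combines its Corollary \ref{Enlargeable Riemannn} (enlargeability transported backwards along a Lipschitz map of non-zero degree, here the degree-one collapse $N^n\#X^n\to X^n$), the strong equivalence of all Riemannian metrics on a closed smooth manifold, Lemma \ref{CAT} for the CAT(0) clause, and the all-dimensions SLB Theorem exactly as you do; your middle step is just the inlined proof of Property \ref{Enlargabel metric}(1). One bookkeeping slip worth noting: the pullback covering $\phi^{*}\widehat{X}$ is the covering associated to $\phi_*^{-1}\bigl(\pi_1(\widehat{X})\bigr)$, not to $\ker\phi_*$ (the paper uses the former), but this does not affect the properness, degree, or Lipschitz claims, so the argument stands.
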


\begin{remark}
Gromov also stated the result which said that a closed spin manifold with a locally CAT(0)-metric carries no PSC-metrics without proof in \cite[Section~4.1.2]{2019arXiv190810612G}.  Theorem B$^\prime$ supports Gromov's assertion.
\end{remark}

\begin{remark}
Though Theorem  \ref{B'} is similar to  Cecchini and Schick's Theorem A \cite{MR4232210} and the starting point of the proofs is  Schoen and Yau's new results \cite{2017arXiv170405490S}, the techniques of the proofs are different.  Cecchini and Schick's  proof  used the  standard constructions from geometric measure theory, especially, no symmetrization and no manifolds with boundary, whereas the proof of Theorem \ref{B'} is an application of Gromov's Spherical Lipschitz Bound Theorem that  uses these two ingredients.
\end{remark}

\begin{remark 2}
 Theorem  \ref{B'} is also inspired by an open conjecture that a closed aspherical manifold does not admit a PSC-metric.  Kasparov and Skandalis \cite{MR1998480} used KK-theory to  prove the strong Novikov conjecture for CAT(0)-group, implying that any locally CAT(0)-manifold carries no PSC-metrics. 
\end{remark 2}

  The fact that a closed Riemannian manifold with non-positive sectional curvature is enlargeable is well-known since Gromov and Lawson first proposed the definition of enlargeable manifolds \cite{MR569070}. Riemannian metrics with non-positive sectional curvature (non-positive curvature metric) are locally CAT(0)-metrics. If a manifold of dimension 2 or 3 admits a locally CAT(0)-metric, then it also admits non-positive curvature metrics,  according to the classic surface theory and the Thurston-Perelmann Geometrization Theorem  \cite[Proposition~1]{MR2872552}. 
 
 But there is a difference between locally CAT(0)-metrics and non-positive curvature metrics in dimensions$\geq$ 4. Aravinda and Farrell \cite{MR1312678} showed that the existence of non-positive curvature metric is not  a homeomorphism invariant in general, but the existence of a locally CAT(0)-metric is homeomorphism invariant. The existence of a non-positive curvature metric depends on that of the smooth structure. Furthermore, locally CAT(0)-manifolds which do not support a smooth structure in dimensions$\geq$ 5 were constructed by Davis and Hausmann \cite{MR1000373}.

  Aspherical $n$-manifolds ($n\geq 4$) that are not covered by Euclidean space  were first constructed  by Davis \cite{MR690848}.  Davis, Januszkiewicz and Lafont \cite{MR2872552} constructed a closed smooth four-dimensional manifold $M^4$,  which supports locally CAT(0)-metrics and whose universal cover $\widetilde{M^4}$ is diffeomorphic to $\mathbf{R^4}$, but $\pi_1$ is not isomorphic to the fundamental group of any compact Riemannian manifold with non-positive curvature. In dimensions$\geq$ 5, Davis and Januszkiewicz \cite[(5b.1)~ Theorem]{MR1131435} constructed a topological locally CAT(0)-manifold,  whose  universal cover is not homeomorphic to $\mathbf{R^n}$. They also constructed a smooth locally CAT(0)-manifold whose universal cover is homeomorphic to $\mathbf{R^n}$, but the boundary at infinity is distinct from $S^{n-1}$ in (5c.1) Theorem. Furthermore, those locally CAT(0)-metrics in (5c.1) Theorem  are strongly equivalent to the length metrics induced by Riemannian metrics since they come from  simplicial metrics for the smooth triangulations of the smooth manifold and the hyperbolization of polyhedra.
   
   More examples of those kind of exotic aspherical manifolds  can be found in \cite{MR1470741} and  \cite[Section~3]{MR2872552}.
 
  Therefore, the connect sum of a closed $n$-manifold and Davis's exotic aspherical $n$-manifold for $n\leq 8$, as  new examples  be detected by our methods, carries no PSC-metrics.

\begin{remark}
  Let $M^n$ $(n\geq 5)$ be the locally CAT(0)-manifold with center-free fundamental group, whose   universal cover is distinct from $\mathbf{R^n}$, as above-mentioned, and $N$ be an arbitrary closed locally CAT(0)-manifold with center-free fundamental group, then the product $M^n \times N$ is a locally CAT(0)-manifold, which does not support any non-positive curvature metrics. Otherwise, if  $M^n \times N$ carries a non-positive curvature metric, then, by Lawson-Yau's splitting theorem \cite{MR334083},  $M^n \times N$ is homeomorphic to  $M_1\times N_1$ such that $M_1$ and $N_1$ are endowed with non-positive curvature metrics and  $\pi_1(M_1)=\pi_1(M^n)$ and $\pi_1(N_1)=\pi_1(N)$. And  then we use the proof of Borel conjecture for Riemannian manifold with non-positive sectional curvature  by Farrell and Jones \cite{MR1216623} \cite{MR1159252} to show that $M_1$ is homeomorphic to $M^n$. Thus, $M_1$ also admits the locally CAT(0)-metric such that  the universal cover is distinct from $\mathbf{R^n}$. That is a contradiction. More details of the proof can be found in   \cite[Proposition~2]{MR2872552}.
\end{remark}

 Furthermore, using Davis's construction, Sapir \cite[Corollary~1.2]{MR3110794} firstly created closed aspherical topological $n$-manifolds $(n\geq 4)$ whose fundamental groups coarsely contain expanders and the aspherical $n$-manifold can be  smooth if $n\geq 5$.  Thus, Sapir's aspherical manifolds have infinite asymptotic dimension, are not coarsely embeddable into a Hilbert space, do not satisfy G. Yu's property A,  do not satisfy the Baum-Connes conjecture with coefficients.  Using Davis's construction and Sapir’s techniques, Osajda constructed  closed aspherical topological $n$-manifolds $(n\geq 4)$ whose fundamental groups contain coarsely embedded expanders in \cite[Corollary~3.5]{MR4176066}.  Therefore,  Sapir's and Osajda's aspherical $n$-manifolds $(n\leq 8)$ do not admit  PSC-metrics by our results.

 The paper is organized as follows: In Section \ref{Enlargeable}, we define    enlargeable length-structures and   prove Theorems  \ref{A} and \ref{B'}. In Section \ref{MV}, we define the positive  $MV$-scalar curvature  on  closed orientable topological manifolds and show the compactly enlargeable length-structures are the obstructions of its existence.

 $\mathbf{Acknowledgment}$: The author thanks Thomas Schick for the stimulating conversations, Simone Cecchini for his linguistic assistance during the preparation of this manuscript,  the referee's detailed comments, and  China Scholarship Council for its funding. 
 
 I learned F. Thomas Farrell and his coauthor's results from his inspirational courses in Yau Center (in Tsinghua University). The note is dedicated to Professor F. Thomas Farrell's 80th birthday.

\section{Enlargeable  length-structures, PSC-metrics and locally CAT(0)-metrics}\label{Enlargeable}

\qquad  A metric space $(X, d)$ is a length metric space if the distance between each pair of points equals the infimum of the lengths of curves joining the points.  A closed topological manifold admits length metrics \cite{MR52763}. Metrics induced by smooth Riemannian metrics on a  closed smooth manifold are length metrics and any two such length metrics are strongly equivalent, i.e. for any length metrics $d$ and $d'$, there exists a $c,C>0$ such that $cd\leq d'\leq Cd$. Moreover, length metrics can be induced by continuous Riemannian metrics on a closed smooth manifold \cite{MR3358543}.   However, unlike Riemannian metrics   on compact smooth manifolds, different length metrics may not be strongly equivalent to each other. For instance, two length metrics, among which one is induced by a Riemannian metric and the other  by a Finsher metric, are topologically equivalent, i.e. they induce the same topology,  but there may not be strongly equivalent in general.

 If $\pi: \hat{X}\to X$ is a covering map, then length metrics, being local, lift from $X$ to $\hat{X}$. Furthermore, every length metric $d$ on $X$ lifts to a unique length metric $\hat{d}$ for which the covering map is a local isometry.
 
 A topological manifold $X$ endowed with a complete length metric is called \textit{$\varepsilon$-hyperspherical }if it admits a continuous map $f$ to $S^n$ $(n=\mathrm{dim}(X))$ which is constant at infinity, of non-zero degree and  such that
 \begin{align*}
  \mathrm{Lip}(f):= \sup\limits_{\mathop{a \not= b}\limits_{{a,b}\in X}}\frac{d_{S^n}(f(a),f(b))}{d_X(a,b)} <\varepsilon.
 \end{align*}
  Here  constant at infinity means that there is a compact subset such that $f$ maps the complement of the compact subset to a point in $S^n$  and  $S^n$ is endowed with standard round metric. From now on, $S^n$ is also a length metric space  induced by the standard round metric $d_{S^n}$.

\begin{definition}[Enlargeable length-structures]
  A length metric $d$ on a closed orientable n-dimensional   topological manifold $X^n$ is said to be enlargeable if for each $\varepsilon > 0$ there is an oriented covering manifold $\widetilde{X^n}$ endowed with the induced metric $\widetilde{d}$ that is $\varepsilon$-hyperspherical.
  
  An enlargeable length-structure on $X^n$ is a strongly equivalent class of an enlargeable metric. 
  
  An enlargeable Riemannian length-structure on a closed orientable smooth manifold is  an enlargeable length-structure that contains   a length metric induced by a Riemannian metric on the manifold.
\end{definition}
 
The length metric induced by a Riemannian flat metric on $T^n$ is an enlargeable metric and $T^n$ endowed with this enlargeable length-structure is an  important example of the manifolds with an  enlargeable length-structure.   Enlargeable length-structures   can also be defined on a closed non-orientable manifold by lifting the metric onto the orientation cover.

\begin{remark}
  The enlargeable length-structure may be used to deal with positive scalar curvature in the metric geometry setting. For instance,   the  definition of scalar curvature for length metrics was given in \cite{MR3884658}.  
  
 Besides, the existence of a PSC-metric depends on the smooth structure.   Trying to use length structure to  study the PSC-metrics in Riemannian geometry, we define the enlargeable Riemannian length-structure.
\end{remark}

 \begin{question}
Is there a  closed topological manifold  with two different enlargeable length-structures? In particular, is there an enlargeable length-structure on closed smooth manifold such that the manifold does not carry an  enlargeable Riemannian length-structure?
\end{question}

\begin{remark}
  The following Lemma \ref{CAT}  shows that a locally CAT(0)-metric is enlargeable only on the assumption that it is strongly equivalent to a Riemannian metric. Especially, the example from the introduction of a non-smoothable manifold with a locally CAT(0)-metric is not known to be enlargeable.

\end{remark}
 
 \begin{property}[Properties of the enlargeable metric]\leavevmode \label{Enlargabel metric}
\begin{enumerate}
 \item[(1)]Let $(X,d_X)$ and $(Y,d_Y)$ be closed  orientable manifolds with length metrics and suppose there exists a Lipschitz continuous map $F:(X,d_X) \rightarrow (Y,d_Y)$ of non-zero degree. If  $d_Y$ is an enlargeable metric on $Y$, then so is $d_X$. 

\item[(2)]  The product of two enlargeable metrics is an  enlargeable metric. 
\end{enumerate}
\end{property}

\begin{proof}
Suppose that $\widetilde{Y}\to Y$ is a covering space and $\widetilde{Y}$  is $\varepsilon$-hyperspherical, i.e. it admits a continuous map $f:\widetilde{Y}\to S^n$ $(n=\mathrm{dim}(Y))$ that is constant at infinity, of non-zero degree and  such that $ \mathrm{Lip}(f)<\varepsilon$.  Let $\widetilde{X}\to X$ be the covering corresponding to the subgroup $F_*^{-1}(\pi_1(\widetilde{Y}))$ and $\widetilde{X}$ be endowed with the induced metric $\widetilde{d_X}$, then $F$ can be lifted to a proper map $\widetilde{F}:\widetilde{X}\to\widetilde{Y}$,  which is still a  continuous map with Lipschitz constant $\mathrm{Lip}(F)$. It implies the map $f\circ \widetilde{F}$ is constant at infinity, $\mathrm{Lip}(f\circ \widetilde{F})<\mathrm{Lip}(F)\varepsilon$  and of non-zero degree. Thus $d_X$ is an enlargeable metric on $X$.

 The fact that the composed map 
 \begin{align*}
 S^n\times S^m \rightarrow S^n \wedge S^m \rightarrow S^{n+m}
 \end{align*}
 
 is Lipschitz continuity and  non-zero degree implies (2).
\end{proof}

 The following two corollaries are immediate consequences of Property \ref{Enlargabel metric}.
 
\begin{corollary}
If a closed manifold $X$ carries an  enlargeable length-structure, then $X\times S^1$ still carries an enlargeable length-structure.
\end{corollary}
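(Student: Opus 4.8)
The plan is to reduce the statement directly to the two parts of Property~\ref{Enlargabel metric}. The key observation is that $S^1$ itself carries an enlargeable metric: taking $n=1$ in the example recorded after the definition, the length metric induced by the flat Riemannian metric on $S^1 = T^1$ is enlargeable. Concretely, for each $\varepsilon > 0$ one passes to the universal cover $\mathbf{R} \to S^1$ with the induced (standard) length metric and maps $\mathbf{R}$ to the round circle $S^1$ by wrapping a long interval $[-R,R]$ once around and sending the complement to a fixed point. This map is constant at infinity, has non-zero degree (it descends to the one-point compactification $\mathbf{R} \cup \{\infty\} \cong S^1$ with degree one), and its Lipschitz constant is comparable to $1/R$, hence $< \varepsilon$ once $R$ is large. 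Thus $S^1$ is enlargeable.

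First I would choose an enlargeable representative $d_X$ of the given enlargeable length-structure on $X$, and let $d_{S^1}$ denote the flat metric on $S^1$ just discussed. Then I would form the product length metric $d_X \times d_{S^1}$ on $X \times S^1$. By Property~\ref{Enlargabel metric}(2), the product of two enlargeable metrics is again enlargeable, so $d_X \times d_{S^1}$ is an enlargeable metric on $X \times S^1$. Its strong equivalence class is therefore an enlargeable length-structure on $X \times S^1$, which is exactly what the corollary asserts.

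There is essentially no genuine obstacle here, since the whole content is already packaged into Property~\ref{Enlargabel metric}; the corollary is really just an instance of part (2) with one factor equal to $S^1$. The only point deserving a line of care is the verification that $S^1$ is enlargeable, i.e. that one can simultaneously arrange non-zero degree, the constant-at-infinity condition, and an arbitrarily small Lipschitz constant on a suitable cover. Once that is in hand, the non-zero degree and the constant-at-infinity property of the relevant product map are supplied automatically by the Lipschitz, non-zero-degree collapse $S^n \times S^1 \to S^n \wedge S^1 \to S^{n+1}$ used in the proof of Property~\ref{Enlargabel metric}(2). If $X$ is non-orientable, one first lifts everything to the orientation cover, as indicated after the definition, and the same argument applies.
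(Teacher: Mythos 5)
Your proposal is correct and matches the paper's intent exactly: the paper derives this corollary as an immediate consequence of Property~2.4(2), with the flat circle $S^1=T^1$ serving as the enlargeable factor (covered by the paper's $T^n$ example). Your extra verification that the flat $S^1$ is enlargeable via the wrapping map on the universal cover $\mathbf{R}$ is a correct filling-in of a detail the paper leaves implicit.
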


\begin{corollary}\label{Enlargeable Riemannn}
Let $(X,d_X)$  be a closed smooth manifold, where $d_X$  is induced by a Riemannnian metric, and $(Y,d_Y)$ be another  manifold, where $d_Y$ is in the enlargeable length-structure. Suppose there exists a Lipschitz continuous map $F:(X,d_X) \rightarrow (Y,d_Y)$ of non-zero degree, then $X$ carries an enlargeable Riemannian  length-structure. 
\end{corollary}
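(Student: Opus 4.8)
The plan is to reduce the statement to Property \ref{Enlargabel metric}(1), which is precisely the mechanism that pulls enlargeability back along a Lipschitz map of non-zero degree. The one discrepancy between the hypotheses and a direct application of that property is that $d_Y$ is only assumed to \emph{lie in} the enlargeable length-structure, i.e.\ to be strongly equivalent to some enlargeable metric, whereas Property \ref{Enlargabel metric}(1) wants an honestly enlargeable target metric. So the first order of business is to replace $d_Y$ by a genuinely enlargeable representative of its class.

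By hypothesis I may choose an enlargeable metric $d_Y'$ on $Y$ that is strongly equivalent to $d_Y$, say $c\, d_Y \le d_Y' \le C\, d_Y$ for constants $c, C > 0$. I then regard the given map as a map $F\colon (X, d_X) \to (Y, d_Y')$, keeping the same underlying continuous function but re-metrizing the target. From $d_Y' \le C\, d_Y$ it follows that $d_Y'(F(a), F(b)) \le C\,\mathrm{Lip}(F)\, d_X(a,b)$, so $F$ is still Lipschitz, now with constant at most $C\,\mathrm{Lip}(F)$; and its degree, being a homotopy-theoretic invariant independent of the chosen metrics, is unchanged and hence still non-zero. Thus $F$ is exactly a Lipschitz map of non-zero degree whose target carries the enlargeable metric $d_Y'$.

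Applying Property \ref{Enlargabel metric}(1) to this map yields that $d_X$ is an enlargeable metric on $X$. It then remains only to unwind the definitions: $d_X$ is by assumption the length metric induced by a Riemannian metric on the smooth manifold $X$, so the strong equivalence class of $d_X$ is an enlargeable length-structure that contains a Riemannian length metric, which is by definition an enlargeable Riemannian length-structure on $X$.

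I do not expect a genuine obstacle; the whole weight of the argument rests on Property \ref{Enlargabel metric}(1), already proved above. The only step worth stating carefully is the passage from strong equivalence to enlargeability, which is really the observation that enlargeability is invariant within a strong-equivalence class and is what makes the notion of an enlargeable length-structure well posed. A minor technical caveat is orientability: the degree of $F$ must be defined, so I would take $X$ and $Y$ oriented as in Property \ref{Enlargabel metric}, handling the non-orientable case by passing to orientation covers as indicated just after the definition of enlargeable length-structures.
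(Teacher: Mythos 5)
Your proof is correct and takes essentially the same route as the paper, which states this corollary (together with the one before it) as an immediate consequence of Property \ref{Enlargabel metric}(1); your write-up simply makes the reduction explicit by passing to an enlargeable representative of the strong-equivalence class of $d_Y$, noting that $F$ stays Lipschitz of the same (non-zero) degree, and observing that the resulting enlargeable metric $d_X$ is Riemannian-induced, so its class is an enlargeable Riemannian length-structure. Your orientability caveat is consistent with the paper, whose Property \ref{Enlargabel metric} already assumes closed orientable manifolds.
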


\begin{theorem}[SLB Theroem  \cite{MR3816521}]
If the scalar curvature of a (possibly incomplete) Riemannian n-manifold $X^n$ $(2\leq n \leq 8)$ is bounded from below by $n(n-1)$, then for all continuous maps $f$ from $X^n$ to the sphere $S^n$ that are constant at infinity and of non-zero degree,  they hold that $\mathrm{Lip}(f)>\frac{C}{\sqrt{n}\pi}$.
Here $S^n$ is endowed with the standard round metric and  $C>\frac{1}{3}$.
\end{theorem}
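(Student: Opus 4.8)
The plan is to reconstruct the argument behind \cite{2017arXiv170405490S} and \cite{MR3816521}, namely a Schoen--Yau-type inductive descent on the dimension carried out through stable minimal hypersurfaces (equivalently, Gromov's $\mu$-bubbles). I would argue by contradiction: assume $f\colon X^n\to S^n$ is constant at infinity, of non-zero degree, and that $\mathrm{Lip}(f)$ is smaller than the asserted threshold, and then run a dimension reduction until the hypotheses become contradictory in dimension $2$. First I would use the warped-product description $S^n=\bigl([-\tfrac\pi2,\tfrac\pi2]\times S^{n-1},\,dt^2+\cos^2 t\,g_{S^{n-1}}\bigr)$ and pull back the latitude function $t$ to obtain $u:=t\circ f\colon X\to[-\tfrac\pi2,\tfrac\pi2]$ with $\mathrm{Lip}(u)\le \mathrm{Lip}(f)$. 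Because $\deg(f)\neq 0$, a generic regular level set $u^{-1}(c)$ carries the homology class witnessing the degree, so after restriction $f$ maps it to an equatorial $S^{n-1}$ with non-zero degree.

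The descent step is the heart of the matter. Inside the preimage band I would minimize area against the weight dictated by the warping factor $\cos t$, producing a stable $\mu$-bubble $Y^{n-1}\subset X^n$ homologous to the slice. The second-variation (stability) inequality for $Y$, combined with the Gauss equation and the lower bound $\mathrm{Sc}_X\ge n(n-1)$, yields after the usual conformal change a metric on $Y$ whose scalar curvature is bounded below by $(n-1)(n-2)$; simultaneously the restriction $f|_Y\colon Y\to S^{n-1}$ remains constant at infinity, of non-zero degree, and has Lipschitz constant controlled by $\mathrm{Lip}(f)$ up to a factor coming from the warping. Iterating this reduction $n-2$ times brings everything down to a surface of Gauss curvature $\ge 1$ admitting a degree-nonzero, small-Lipschitz map to $S^2$; there the area/degree estimate $\mathrm{Area}(S^2)\le \mathrm{Lip}^2\cdot\mathrm{Area}(\text{surface})$ together with the Gauss--Bonnet bound $\mathrm{Area}(\text{surface})\le 4\pi$ forces a definite lower bound on the Lipschitz constant and closes the contradiction.

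The remaining work is bookkeeping of constants: each descent contributes a factor governed by the warping $\cos t$ and by the square root appearing in the stability/width estimate for $\mathrm{Sc}\ge n(n-1)$, and tracking the length scale $\sim 1/\sqrt n$ against the diameter $\pi$ of the target is what produces the final bound $\mathrm{Lip}(f)>\tfrac{C}{\sqrt n\,\pi}$ with $C>\tfrac13$. I expect two genuine obstacles. The first, and the reason for the hypothesis $n\le 8$, is regularity: the area-minimizing $\mu$-bubbles are smooth only below dimension $8$, so the inductive hypersurfaces may be singular once $n\ge 8$ and the descent breaks. The second is analytic bookkeeping in the non-compact, merely incomplete setting --- one must use that $f$ is constant at infinity to confine the minimizers to a compact region, to guarantee their existence with the prescribed boundary behaviour, and to keep the degree non-zero after slicing; handling these properness and boundary issues cleanly, rather than the formal curvature computation, is where the real care is needed.
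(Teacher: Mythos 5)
The paper does not actually prove this statement: it is imported verbatim as Gromov's Spherical Lipschitz Bound Theorem from \cite{MR3816521}, and everything in the paper (Theorems A, B, B$^\prime$, C) is derived \emph{from} it. So the only fair comparison is against Gromov's own argument, and your sketch does identify its genuine skeleton: pulling back the latitude function of the suspension $S^n=\bigl([-\frac{\pi}{2},\frac{\pi}{2}]\times S^{n-1}, dt^2+\cos^2 t\, g_{S^{n-1}}\bigr)$, a Schoen--Yau-type descent through stable minimal hypersurfaces/$\mu$-bubbles, the regularity origin of the restriction $n\le 8$, and the dimension-$2$ endgame via Gauss--Bonnet plus the area/degree inequality $\deg\cdot\operatorname{Area}(S^2)\le \mathrm{Lip}^2\cdot\operatorname{Area}$. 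Your emphasis on using ``constant at infinity'' to confine minimizers to a compact region in the incomplete setting is also the right concern.

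There is, however, one concrete gap, and it sits exactly where the theorem's quantitative content is produced. You assert that stability plus the Gauss equation yields, ``after the usual conformal change,'' a metric on the descent hypersurface $Y^{n-1}$ with $\mathrm{Sc}\ge (n-1)(n-2)$ while $\mathrm{Lip}(f|_Y)$ stays controlled. The conformal (Schoen--Yau) descent gives only a sign statement --- positivity of a Yamabe-type quantity --- not a pointwise quantitative lower bound; worse, the conformal factor is not under $C^0$ control, so it can destroy precisely the Lipschitz bound your induction must preserve. Gromov's proof avoids this by \emph{not} changing the metric conformally: he symmetrizes torically, promoting the stable $\mu$-bubble to a warped product whose symmetrized scalar curvature retains an explicit lower bound, and he couples this with quantitative width/spacing estimates for bands in the latitude decomposition; the subintervals of length $\sim \pi/\sqrt{n}$ there are the source of both the $\sqrt{n}$ in the denominator and the constant $C>\frac{1}{3}$. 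Without this replacement (or an equivalent explicit $\mu$-bubble width inequality), your induction closes only a qualitative non-existence statement and cannot reach the stated bound $\mathrm{Lip}(f)>\frac{C}{\sqrt{n}\pi}$. Since the paper itself treats the theorem as a citation, a reconstruction was not owed; but as written, your middle step would fail quantitatively, and fixing it means substituting torical symmetrization (or the band-width estimates of \cite{2017arXiv170405490S}-style descent) for the conformal change.
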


\begin{question}\label{Lipschitz}
  Let $(N^n, g)$ be a complete Riemannian $n$-manifold (compact or non-compact) with scalar curvature bounded below by $n(n-1)$. Let  $f$ be a continuous map from $N^n$ to the sphere $S^n$ with standard round metric of non-zero degree that is constant at infinity. Is  $\mathrm{Lip}(f)$ bounded from blow by one?
\end{question}

\begin{remark} 
  A positive answer to Question \ref{Lipschitz} would allow us to also cover Llarull's rigidity theorems \cite[Theorem~A~and~B]{MR1600027},  Lohkamp's results \cite{2018arXiv181211839L}, and their remarkable corollaries.
\end{remark}

  Gromov aimed to improve the lower bound of the Lipschitz constant in \cite[Section~3]{MR3816521} and discussed that the Lipschitz bound would be 1. Here,  the existence of a uniformly positive  lower bound of the Lipschitz constant is used as the main tool to detect the  obstruction to the  existence of  PSC-metrics. 

\begin{proof}[Proof of Theorem \ref{A}]
   Assume $X^n$ carries an enlargeable length-structure, there exists a length metric $d$ such that one orientable covering $(\widetilde{X^n},\tilde{d})$ is $\varepsilon$-hyperspherical ($\varepsilon$ small enough). If $X^n$  admits a PSC-metric $g$ in the enlargeable length-structure, then the Lipschitz constant of all maps (maps are constant at infinity and non-zero degree)  from $(\widetilde{X^n},\tilde{g})$ to $S^n$ has a uniformly positive lower bound $C$ by the SLB Theorem. Besides, there are positive constants $\alpha_1$ and $\alpha_2$ such that $\alpha_1d\leq d_g \leq \alpha_2d $  by  the definition of enlargeable length-structure, where $d_g$ is the induced metric by $g$ on $X^n$. Then the Lipschitz constant of the map from $(\widetilde{X^n},\tilde{d})$ to $S^n$ has the uniformly lower bound $\alpha_1C$, which contradicts  the $\varepsilon$-hypersphericity.
\end{proof}

  Let $(X, d_X)$ be a length space. A geodesic triangle $\bigtriangleup$ in $X$ with geodesic segments as its sides is said to satisfy the CAT(0)-inequality if it is slimmer than the comparison triangle in the Euclidean plane, i.e. if there is a comparison triangle $\bigtriangleup'$ in the Euclidean plane with sides of the same length as the sides of $\bigtriangleup$ such that the distance between points on $\bigtriangleup$ is less than or equal to the distance between corresponding points on $\bigtriangleup'$. A length metric $d$ on $X$ is said to be a locally CAT(0)-metric if every point in $X$ has a geodesically convex neighborhood, in which every geodesic triangle satisfies the CAT(0)-inequality.
  
 A locally CAT(0)-manifold is a  topological manifold endowed with a locally CAT(0)-metric. Gromov generalized the classic Hadamard-Cartan theorem to locally CAT(0)-manifolds  \cite{MR823981}: the universal cover of a locally CAT(0)-manifold endowed with the induced metric is a globally CAT(0)-manifold, i.e. every two points can be connected by a unique geodesic line and every geodesic  triangle on it satisfies the CAT(0)-inequality. Thus a locally CAT(0)-manifold is aspherical, i.e. its universal cover is contractible. 
 
\begin{lemma}\label{CAT}
A locally CAT(0)-metric which is strongly equivalent to a Riemannian metric on a closed smooth manifold is an enlargeable metric.
\end{lemma}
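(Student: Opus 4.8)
The plan is to adapt the classical Gromov--Lawson proof that closed non-positively curved Riemannian manifolds are enlargeable, replacing the inverse exponential map by the geodesic (logarithm) map of the CAT(0) structure and reading every Lipschitz bound off the CAT(0) metric directly. Write $d$ for the given locally CAT(0)-metric and $g$ for a strongly equivalent Riemannian metric. First I would lift $d$ to the universal cover $\pi\colon \widehat X\to X$. By Gromov's Hadamard--Cartan theorem recalled above, the induced metric $\hat d$ is globally CAT(0); since $X$ is closed it is complete, and since $X$ is a manifold without boundary it is geodesically complete, so $\widehat X$ is a proper, contractible, geodesically complete CAT(0) $n$-manifold, oriented because $X$ is. Because enlargeability of $d$ only asks that for each $\varepsilon>0$ some oriented cover be $\varepsilon$-hyperspherical, it suffices to build, for all large $R$, a continuous map $f_R\colon \widehat X\to S^n$ that is constant at infinity, of non-zero degree, and with $\mathrm{Lip}_{\hat d}(f_R)\le C/R$ for a constant $C$ independent of $R$. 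I emphasise that this bound is measured with respect to $\hat d$ itself, so the strong equivalence with $g$ is not used to convert Lipschitz constants; its role appears only in the topological step below.

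Next I would fix a base point $p$ and consider the logarithm map $\log_p\colon \widehat X\to C_p\widehat X$ into the tangent cone $C_p\widehat X=\mathrm{Cone}(\Sigma_p)$ over the space of directions $\Sigma_p$, sending $x$ to the direction of the unique geodesic $[p,x]$ at cone-radius $\hat d(p,x)$. Uniqueness of geodesics and geodesic completeness ensure that every direction is realised by a ray and that $\exp_p$ is a continuous, proper bijection of the cone onto $\widehat X$, hence a homeomorphism; so $\log_p$ is a radial homeomorphism, in particular proper, of degree $\pm 1$, and preserving the distance to the base point, $d_{C_p}(o,\log_p x)=\hat d(p,x)$, where $o$ is the cone point. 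The analytic heart, replacing the Rauch estimate, is that $\log_p$ is distance non-increasing: writing $\angle_p$ for the Alexandrov angle and $\bar\angle_p$ for the Euclidean comparison angle of the triangle $(p,x,y)$, the CAT(0) inequality gives $\angle_p\le\bar\angle_p$, whence
\[ d_{C_p}(\log_p x,\log_p y)^2=\hat d(p,x)^2+\hat d(p,y)^2-2\,\hat d(p,x)\,\hat d(p,y)\cos\angle_p(x,y)\le \hat d(x,y)^2 . \]
Thus $\log_p$ is $1$-Lipschitz from $(\widehat X,\hat d)$ onto $C_p\widehat X$.

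Then I would collapse the cone onto the sphere. Rescaling the cone by $1/R$ is $(1/R)$-Lipschitz, and the radial map $r_R\colon C_p\widehat X\to S^n$ that stretches the closed cone-ball of radius $R$ over $S^n$ and sends its complement to the south pole is $C/R$-Lipschitz, with $C$ depending only on a fixed Lipschitz map $\Sigma_p\to S^{n-1}$. Collapsing the cone-sphere of radius $R$ realises the quotient $\mathrm{Susp}(\Sigma_p)$, the unreduced suspension of $\Sigma_p$. Since $\widehat X\cong \mathrm{Cone}(\Sigma_p)$ is an $n$-manifold, the local homology at $p$ identifies $\Sigma_p$ as a homology $(n-1)$-sphere; its suspension is simply connected with the homology of $S^n$, hence $(n-1)$-connected by the Hurewicz theorem and, being $n$-dimensional, homotopy equivalent to $S^n$ by Whitehead's theorem. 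This yields a degree-one map to $S^n$, so $r_R$ has degree $\pm 1$. Setting $f_R:=r_R\circ\log_p$ gives $\mathrm{Lip}_{\hat d}(f_R)\le C/R$, constancy at infinity (any $x$ with $\hat d(p,x)\ge R$ maps to the south pole because $\log_p$ is radial), and $\deg f_R=\pm 1$. Letting $R\to\infty$ shows $\widehat X$ is $\varepsilon$-hyperspherical for every $\varepsilon>0$, so $d$ is enlargeable.

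The step I expect to be the main obstacle is the last, topological one: producing a genuine non-zero-degree map to $S^n$. This requires controlling the local structure of the CAT(0) manifold --- that $\log_p$ is a proper homeomorphism onto a well-behaved cone and that $\Sigma_p$ is a finite-dimensional homology $(n-1)$-sphere carrying a CW structure, so that the Hurewicz and Whitehead arguments apply. This is exactly where the hypothesis that $d$ is strongly equivalent to a Riemannian metric is indispensable: bi-Lipschitz equivalence to a smooth metric forces enough tameness for these local homology computations, whereas for a merely topological, non-smoothable locally CAT(0)-manifold the space of directions may be pathological and the degree argument is unavailable --- in agreement with the remark preceding the lemma that such examples are not known to be enlargeable.
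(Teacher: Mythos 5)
Your overall strategy (radially contract the universal cover, then collapse to $S^n$) parallels the paper's, but you route the contraction through the tangent cone, and that is exactly where the argument breaks. The central claim that $\log_p$ (equivalently $\exp_p$) is a proper bijection, hence a ``radial homeomorphism'' of degree $\pm 1$ onto $\mathrm{Cone}(\Sigma_p)$, is false for spaces covered by Lemma \ref{CAT}. Take a closed surface of genus $2$ with a flat metric having one cone point of angle $6\pi$: this is a locally CAT(0)-metric, and it is strongly equivalent to a smooth Riemannian metric (near the cone point the radial map $(r,\theta)\mapsto(r,\theta/3)$ is bi-Lipschitz to the flat disc), so the lemma applies to it. In the universal cover, a geodesic from a regular point $p$ that hits a lifted cone point $o$ admits a whole sector's worth of geodesic continuations (every outgoing direction at $o$ making angle $\geq\pi$ with the incoming one, a set of angular measure $4\pi$), and every point in this two-dimensional ``shadow'' behind $o$ subtends the \emph{same} direction at $p$. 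So $\log_p$ crushes an open set onto a single ray: it is not injective, $\exp_p$ is not even well defined (geodesic extensions are non-unique at such points), and your Hurewicz--Whitehead degree computation, which presupposes the homeomorphism $\widehat X\cong\mathrm{Cone}(\Sigma_p)$, has no foundation. Your preliminary assertion that a closed locally CAT(0)-manifold is automatically geodesically complete is likewise a deep structure theorem (in the vein of Lytchak--Nagano), not an observation one may simply record. Only your $1$-Lipschitz estimate for $\log_p$ via the angle comparison is correct as stated.

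A second warning sign is that, as you deploy it, the strong-equivalence hypothesis does no work: a bi-Lipschitz change of metric does not control angles or spaces of directions, so it cannot supply the ``tameness'' you invoke in the last step. If your argument were sound it would show that \emph{every} locally CAT(0)-metric on a closed topological manifold is enlargeable, which the paper explicitly flags as unknown (this is the point of the remark about non-smoothable examples and of the remark following Lemma \ref{CAT}). The paper's proof avoids tangent cones entirely: it uses the geodesic contraction $f_t(x)=\gamma_x(t\tilde d(x,x_0))$ toward a basepoint, which is Lipschitz with small constant directly from the CAT(0) comparison inequality and is properly homotopic to the identity (hence proper of non-zero degree), and then composes with a fixed smooth collapse map $\pi:\widetilde{M^n}\to S^n$ around $x_0$. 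The hypothesis enters precisely there: $\pi$ is Lipschitz for the Riemannian metric by smoothness and compactness, hence Lipschitz for $d$ by strong equivalence, giving $\mathrm{Lip}(\pi\circ f_t)\leq tC$. Replacing your cone construction by this contraction-plus-collapse composition repairs the proof and uses the hypothesis where it is actually needed.
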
 

\begin{proof}
 Let $(M^n,d)$ be the  closed $n$-dimensional smooth locally CAT(0)-manifold, then its universal cover $(\widetilde{M^n},\tilde{d})$ is a globally CAT(0)-manifold by Gromov's Theorem. Consider the map
\begin{align*}
f_t: \widetilde{M^n}\rightarrow \widetilde{M^n}  \quad  x \rightarrow  \gamma_x(t\tilde{d}(x,x_0)),
\end{align*}
where $x_0$ is a fixed point in $\widetilde{M^n}$, $t\in (0,1]$ and $\gamma_x$ is the unique geodesic segment from $x$ to $x_0$. It is well-defined by the property of globally CAT(0)-manifolds and is a proper map such that the degree of $f_t$ is non-zero. By the CAT(0)-inequality applied to the geodesic triangle with endpoints $x$, $y$ and $x_0$, one gets 
\begin{align*}
\tilde{d}(f_t(x),f_t(y))\leq td_\mathbf{R^2}(x,y)=td(x,y)
\end{align*}
 for $x,y \in \widetilde{M^n}$. Therefore, $\mathrm{Lip}(f_t)\leq t$.

 Let $\pi: (\widetilde{M^n},\tilde{d}) \rightarrow S^n$ be a collapse map around $x_0$. Then the degree of $\pi$ is 1  and $\mathrm{Lip}(\pi)\leq C$ by the smoothness of $\pi$, the fact that $d$ is strongly equivalent to a Riemannian metric and the compactness of $M^n$. Thus,  $\pi \circ f_t:\widetilde{M^n} \rightarrow S^n$ has non-zero degree and $\mathrm{Lip}(\pi \circ f_t) \leq tC$. For any small $\varepsilon > 0$, we can choose $t$ such that $(\widetilde{M^n},\tilde{d})$ is $\varepsilon$-hyperspherical. That means that a  manifold  endowed with a locally CAT(0)-metric is enlargeable.
\end{proof} 

\begin{remark}
The assumption that the locally CAT(0)-metric is strongly equivalent to a Riemannian metric is used in the argument of  $\mathrm{Lip}(\pi)\leq C$. It is not clear if the condition can be dropped for Lemma \ref{CAT}. 
\end{remark}

\begin{proof}[Proof of Theorem \ref{B'}]
 Combining  Lemma \ref{CAT}, Corollary \ref{Enlargeable Riemannn},  and  the fact of the strong equivalence of all Riemannian metrics on a closed smooth manifold, we conclude Theorem  \ref{B'}.
\end{proof}

\section{$Sc^{MV}\geq\kappa$}\label{MV}
\qquad To generalize the notion of PSC-metrics to non-Riemannian metric space, for example to piecewise smooth polyhedral spaces,  Gromov  \cite[Section~5.3.1]{2019arXiv190810612G} defined the max-scalar curvature $Sc^{max}$  as follows:
\begin{definition}[Gromov]
Given a metric space $P$ which is locally compact and locally contractible, and a homology class $h\in H_n(P)$ defines $Sc^{max}(h)$ as the supremum of the numbers $\sigma\geq 0$, such that there exists a closed orientable Riemannian $n$-manifold $X$ with $Sc(X)\geq\sigma$ and a 1-Lipschitz map $f: X\to P$, such that the fundamental homology class $[X]$ goes to $h$,  $f_*[X]=h$. Here $Sc(X)$ is the scalar curvature of $X$ and 1-Lipschitz map $f$ means $\mathrm{Lip}(f)\leq 1$.
\end{definition}

\begin{remark}
The definition makes sense  without assuming $\sigma\geq 0$. But if an $h\in H_n(P)$ $(n\geq 3)$ can be represented by the fundamental homology class $[X]$, then we always have $Sc^{max}(h)\geq 0$. Since  a closed orientable smooth  $n$-manifold $(n\geq 3)$ admits Riemnanian metrics with constant negative scalar curvature by Kazdan-Warner theorem \cite{MR365409}, then one can scale the metric such that the Lipschitz constant is small and the scalar curvature is closed to $0$.

\end{remark}

 Though $Sc^{max}([X])\geq \inf\limits_xSc(X,x)$ for all  closed Riemannian  manifold $X$ as observed by Gromov, the positivity of the max-scalar curvature cannot imply that it carries a PSC-metric in general. For instance, let $\Sigma$ be  the  exotic sphere which does not admit PSC-metrics, whose existence  was showed by Hitchin in \cite{MR358873}, and $g_{\Sigma}$ be a Riemanniam metric on it. Then one can scale the round metric on the standard sphere such that the Lipschitz constant of the identity map is smaller than 1. Thus one has $Sc^{max}([\Sigma])> 0$, but $\Sigma$  does not admit PSC-metrics.  
    
Furthermore, let $(N,g_N)$ be a closed oriented Riemannian manifold with a PSC-metric $g_N$ and   $M$ be a closed oriented smooth manifold. Assume that there exists a degree one smooth map $f:N\to M$,  then one gets   $Sc^{max}([M])\geq 0$ with an arbitrary smooth Riemannian metric $g$ on $M$. That means $Sc^{max}([M])\geq 0$, even when the scalar curvature of $(M,g)$ is $-1$.

    Therefore, the definition of the max-scalar curvature on a metric space needs to be improved.  Gromov also proposed  the $n$-volumic scalar curvature on metric measure spaces in \cite[Section~26]{Gromov}. The $n$-volumic scalar curvature  may be too general to be useful, for more discussion see  \cite{MR4210894}.     The following definition is trying to mix the  max-scalar curvature and $n$-volumic scalar curvature on  metric measure spaces.  

Let $(S^2(R),d_S, vol_S)$ be a Riemannian 2-sphere  endowed with the round metric such that the  scalar curvature equals to $2R^{-2}$ and that $(\mathbf{R}^{n-2},d_E, vol_E)$ is endowed with Euclidean metric, then the product manifold  $S^2(R)\times \mathbf{R}^{n-2}$ can be endowed with the Pythagorean product metrics $d_{S\times E}:=\sqrt{d_S^2 + d_E^2}$   and the volume $vol_{S\times E}:=vol_S \otimes vol_E$.

From now on, let $X^n$ be a closed topological $n$-manifold and $d$ be the length metric such that the Hausdorff dimension of $(X^n, d)$  is $n$, i.e $\mathrm{dim}_H(X^n)=n$. Therefore,  there exists the $n$-dimension Hausdorff measure $\mu_n$ on it and then we normalize it by  $\mathcal{H}^n_d=\mathcal{W}_n\mu_n$, where $\mathcal{W}_n$ is the  $n$-dimensional volume (the Lebesgue measure) of a Euclidean ball of radius 1 in the $n$-dimensional Euclidean space,  so that  $\mathcal{H}^n_{d_E}=vol_E$ for  $\mathbf{R}^{n}$ with the Euclidean metric $d_E$. Therefore, we have the   metric measure space  $(X^n,d,  \mathcal{H}^n_d)$ and will focus on this kind of metric measure space $(X^n,d,  \mathcal{H}^n_d)$ in this paper.

\begin{example}[Examples of metric measure spaces  $(X^n,d,  \mathcal{H}^n_d)$]~\\
\begin{enumerate}
\item[1.]  Smooth oriented Riemannian manifolds $(M^n,g)$  with the induced volume forms $(M^n,d_g,vol_g)$  are the fundamental examples. 
\item[2.]  A length metric that is strongly equivalent to a Riemannian metric satisfies the requirement, since the Hausdorff dimension is bi-Lipschitz invariant. 
\item[3.]  Locally CAT(0)-manifolds with induced Hausdorff measure are examples, as the Hausdorff dimension of a closed topological $n$-manifold with a locally CAT(0)-metric is $n$.
\item[4.]  If $X^n$  admits an length metric $d'$ such that $(X^n,d')$ is a Alexandrov space with curvature bounded from below, then  $\mathrm{dim}_H(X^n)=n$.  Then the Alexandrov space is also an example. 
\end{enumerate}
\end{example}

Note that there is only one reasonable notion of volume for Riemannian manifolds. But one can define Finslerian volumes for  Finsler metrics in different ways and obtain essentially different results  \cite[Proposition~5.5.12]{MR1835418}.

 We will define the positive of $MV$-scalar curvature $Sc^{MV}$ on the metric measure space  $(X^n,d,  \mathcal{H}^n_d)$. 

\begin{definition}[$Sc^{MV}\geq\kappa$]
  The $MV$-scalar curvature of $X^n$ is bounded from below by $\kappa>0$ for  $X^n=(X^n, d,  \mathcal{H}^n_d)$, i.e. $Sc^{MV}(X^n)\geq\kappa>0$,  if the closed oriented topological $n$-dimensional  $X^n$ satisfies the following two conditions:
\begin{itemize}
\item[(1).] The metric space $(X^n,d)$   satisfies  $Sc^{max}([X^n])\geq \kappa$ for the fundamental class  $[X^n]\in H_n(X^n;\mathbf{Z})$. 

\item[(2).] The metric measure space $X^n$ is locally volume-wise smaller than  $S^2(R)\times \mathbf{R}^{n-2}=(S^2(R)\times \mathbf{R}^{n-2},d_{S\times E},vol_{S\times E})$ for all $R>\frac{\sqrt{2}}{\sqrt{\kappa}}$, i.e. for  $R>\frac{\sqrt{2}}{\sqrt{\kappa}}$,  there is an $\epsilon_R$, which depends on $R$, such that   all $\epsilon_R$-balls in $X$ are smaller than the $\epsilon_R$-balls in $S^2(R)\times \mathbf{R}^{n-2}$, $\mathcal{H}^n_d (B_x(\epsilon_R))<vol_{S\times E}(B_{x'}(\epsilon_R))$, for all $x\in X$ and $ x'\in S^2(R)\times \mathbf{R}^{n-2} $. 
\end{itemize}
\end{definition}

Since $Sc^{max}([X^n])=Sc^{max}(-[X^n])$, the definition is independent from the chosen of   the orientation. And  $Sc^{MV}(X^n)\geq\kappa$ is invariant under the isomorphic transformation. Here an isomorphic transformation means there is $f: (X,d,\mu)\to (X',d',\mu')$ such that $f_*\mu'=\mu$ and $f$ is isometric between $d$ and $d'$.  Thus, the definition  is well-defined.

 \begin{proposition}
 Let $g$ be a $C^2$-smooth Riemannian metric on a closed oriented $n$-manifold $M^n$ with induced metric measure space $(M^n, d_g, vol_g)$, then the scalar curvature of $g$ is bounded from below by $\kappa>0$ if and only if $Sc^{MV}(M^n)\geq \kappa$.
\end{proposition}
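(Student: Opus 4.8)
The plan is to reduce the entire equivalence to the classical second-order asymptotic expansion of the volume of small geodesic balls, which converts a pointwise scalar-curvature bound into a local volume comparison and back. Recall that for a $C^2$ Riemannian metric one has, in geodesic normal coordinates centred at $x$,
\[
  \mathrm{vol}\big(B_x(r)\big)=\mathcal{W}_n\, r^n\Big(1-\frac{Sc(g)(x)}{6(n+2)}\,r^2+o(r^2)\Big),
\]
and, because $M^n$ is compact and $Sc(g)$ is continuous, the remainder $o(r^2)$ is uniform in $x$. By the normalisation $\mathcal{H}^n_{d_g}=\mathcal{W}_n\mu_n$ fixed in the paper, $\mathcal{H}^n_{d_g}(B_x(r))$ is exactly this Riemannian volume. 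The homogeneous model $S^2(R)\times\mathbf{R}^{n-2}$ has constant scalar curvature $2R^{-2}$, so its balls satisfy the same expansion with $Sc$ replaced by $2R^{-2}$ (and an honest $O(r^4)$ remainder, the space being homogeneous and analytic), and the hypothesis $R>\frac{\sqrt{2}}{\sqrt{\kappa}}$ is precisely $2R^{-2}<\kappa$. Everything therefore comes down to comparing the $r^2$-coefficients $Sc(g)(x)$ and $2R^{-2}$.

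For the implication $Sc(g)\ge\kappa\Rightarrow Sc^{MV}(M^n)\ge\kappa$, condition (1) is immediate: the identity $(M^n,g)\to(M^n,d_g)$ is a $1$-Lipschitz map of degree one with $f_*[M^n]=[M^n]$, so since $Sc(g)\ge\kappa$ the definition gives $Sc^{max}([M^n])\ge\kappa$. For condition (2), fix any $R>\frac{\sqrt{2}}{\sqrt{\kappa}}$, so that $2R^{-2}<\kappa\le Sc(g)(x)$ for every $x$; inserting this into the two expansions and using the uniform remainder, the more negative $r^2$-term of the $M$-balls makes $\mathrm{vol}(B_x(r))<\mathrm{vol}(B_{x'}(r))$ for all $x$ once $r$ drops below a threshold $\epsilon_R$, which is exactly (2).

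For the converse I argue by contraposition. If $Sc(g)(x_0)<\kappa$ at some point, then by continuity I may choose $R$ with $Sc(g)(x_0)<2R^{-2}<\kappa$, so $R>\frac{\sqrt{2}}{\sqrt{\kappa}}$ and condition (2) must hold for this $R$. But in the expansion at $x_0$ the coefficient $Sc(g)(x_0)$ is strictly below the model coefficient $2R^{-2}$, so the $M$-ball at $x_0$ carries the larger $r^2$-term and $\mathrm{vol}(B_{x_0}(r))>\mathrm{vol}(B_{x'}(r))$ for all small $r$, contradicting the volume-smallness demanded by (2). Hence $Sc(g)\ge\kappa$ everywhere.

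The point requiring the most care --- and, I think, the real content --- is the scale at which condition (2) is read. The equivalence forces the ball comparison to be an infinitesimal statement (holding for all radii up to $\epsilon_R$, equivalently as $r\to 0$): a flat torus has $Sc\equiv 0$, yet its large balls eventually have smaller volume than those of the non-compact model, so a single large radius would spuriously validate (2). Thus the contraposition must be run in the small-$r$ regime, and one must also confirm that the $o(r^2)$ error is genuinely uniform over the compact $M^n$ under only the $C^2$ hypothesis; these two checks are where compactness and $C^2$-smoothness are spent.
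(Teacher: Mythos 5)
Your argument is correct and takes essentially the same route as the paper's: both directions rest on the small-ball expansion $\mathrm{vol}_g(B_x(r)) = \mathrm{vol}_E(B_r)\left[1 - \frac{Sc_g(x)}{6(n+2)}r^2 + O(r^4)\right]$, with compactness and continuity of $Sc_g$ supplying uniformity in the forward direction and, in the converse, a point where the scalar curvature drops below $\kappa$ together with a model radius wedged in between (the paper takes $\gamma=\sqrt{2/(\kappa-\epsilon/2)}$ where you take $R$ with $Sc_g(x_0)<2R^{-2}<\kappa$, which is the same device). Your closing caveat --- that condition (2) must be read as an infinitesimal comparison holding for all radii up to $\epsilon_R$, since a single large radius would validate it spuriously (e.g.\ for a flat torus against the non-compact model) --- is a genuine point that the paper's terse converse leaves implicit, and your explicit uniformity check of the $o(r^2)$ remainder is likewise more careful than the paper's.
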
  

\begin{proof}
  Assume  the scalar curvature of $g$ is bounded from below by $\kappa>0$,  then the volume formula of a small ball, 
  \begin{align*}
  vol_g(B_x(r)) = vol_E(B_r)\left[1 - \frac{Sc_g(x)}{6(n+2)} r^2 + O(r^4)\right]
  \end{align*}
   as $r \rightarrow 0$, where $B_x(r)$ is an $r$-ball in $M^n$ and  $B_r$ is an $r$-ball in $\mathbf{R}^n$, implying condition (2) in the definition of  $Sc^{MV}(M^n)\geq \kappa$. And $Sc^{max}([M^n])\geq \inf\limits_xSc(M^n,x)\geq \kappa$ implies condition (1). 
   
   On the other hand, if  $Sc^{MV}(M^n)\geq \kappa$,  then $Sc_g \geq \kappa > 0$.  Otherwise, assume there exist small $\epsilon>0$ such that  $Sc_g \geq \kappa-\epsilon>0$. That means  there exists a point $x_0$ in $M^n$ such that $Sc_g(x_0)= \kappa-\epsilon$, as $M^n$ is compact and the scalar curvature is a continuous function on $M^n$. Thus, we can find a small $r$-ball $B_r(x_0)$ such that the volume of   $B_r(x_0)$ is greater than the volume of the $r$-ball in the $S^2(\gamma)\times \mathbf{R}^{n-2}$ for $\gamma=\sqrt{\frac{2}{\kappa-\frac{\epsilon}{2}}}$, which is a contradiction. 
\end{proof}  

\begin{remark}
The existence of length metrics on $X^n$ with  $Sc^{MV}(X^n)\geq \kappa$ is  the  invariant under homeomorphisms. However, the positivity of the $Sc^{MV}$ cannot imply that it carries a PSC-metric in general.

 For instance, one can use the identity map between the  exotic sphere $\Sigma^n$  above  and the standard sphere to pull back the length metric which is induced by the standard round metric to $\Sigma^n$, i.e. giving $id: \Sigma^n\to (S^n, d_{S^n})$ gets $(\Sigma^n, id^*d_{S^n})$. Then one has $Sc^{MV}(\Sigma^n)\geq n(n-1)$ for the metric measure space $(\Sigma^n, id^*d_{S^n}, \mathcal{H}^n_{id^*d_{S^n}})$. But $id^*d_{S^n}$ is not induced by any  $C^2$-smooth Riemannian metric on $\Sigma^n$.
\end{remark}

\begin{remark}
The example of the exotic sphere above shows that the condition (1) cannot imply the condition (2) in the definition of $Sc^{MV}\geq\kappa$ in general.  The condition (2) also cannot imply the condition (1) in general.  Since one can choose the length metric induced by a Finsler metric such that the induced Hausdorff measure is  locally volume-wise smaller than  $S^2(R)\times \mathbf{R}^{n-2}=(S^2(R)\times \mathbf{R}^{n-2},d_{S\times E},vol_{S\times E})$ for all $R>\frac{\sqrt{2}}{\sqrt{\kappa}}$,  and the Finsler metric is not bi-Lipshchitz equivalent to a Riemannian metric in general.
\end{remark}

\begin{question}\label{Aspherical}
Let $N^n$ be a closed orientable aspherical $n$-manifold. Does there exist an orientable closed Riemannian $n$-manifold $M^n$ such that there exists a non-zero degree map $f$ from  $M^n$ to $N^n$?
\end{question}

\begin{remark}
 Question \ref{Aspherical} is inspired by the conjecture  that a closed aspherical manifold does not carry  PSC-metrics.  It is natural to ask that whether a closed aspherical manifold admits a length metric with  positive max-scalar curvature or positive of $Sc^{MV}$.
\end{remark}

\begin{proposition}[Quadratic Scaling]
 Assume that $(X^n, d,   \mathcal{H}^n_d)$ satisfies   $Sc^{MV}(X^n)\geq \kappa>0$, then $Sc^{MV}(\lambda X^n)\geq \lambda ^{-2}\kappa$ for all   $\lambda>0$, where $\lambda X^n:=(X^n, \lambda \cdot d,\lambda^n \cdot   \mathcal{H}^n_d)$.
\end{proposition}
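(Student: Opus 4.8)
The plan is to verify the two defining conditions of $Sc^{MV}(\lambda X^n)\geq \lambda^{-2}\kappa$ separately, exploiting three elementary scaling facts: that the $n$-dimensional Hausdorff measure obeys $\mathcal{H}^n_{\lambda d}=\lambda^n\mathcal{H}^n_d$ (so that $\lambda X^n$ is literally the metric measure space $(X,\lambda d,\mathcal{H}^n_{\lambda d})$, the normalizing constant $\mathcal{W}_n$ being independent of the metric), that the scalar curvature of a Riemannian metric obeys $Sc_{\lambda^2 g}=\lambda^{-2}Sc_g$, and --- crucially --- that the comparison family is stable under rescaling, with $\lambda\cdot(S^2(R)\times\mathbf{R}^{n-2})=S^2(\lambda R)\times\mathbf{R}^{n-2}$, since $(\mathbf{R}^{n-2},\lambda d_E)$ is isometric to $(\mathbf{R}^{n-2},d_E)$ and $\lambda d_{S\times E}=\sqrt{(\lambda d_S)^2+(\lambda d_E)^2}$. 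Note that the curvature threshold of condition (2) transforms consistently, namely $\tfrac{\sqrt2}{\sqrt{\lambda^{-2}\kappa}}=\lambda\tfrac{\sqrt2}{\sqrt\kappa}$.

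For condition (1), I would start from a witness for $Sc^{max}([X^n])\geq\kappa$ in $(X,d)$: for each $\sigma<\kappa$ a closed oriented Riemannian manifold $(Y,g)$ with $Sc_g\geq\sigma$ and a $1$-Lipschitz map $f\colon(Y,g)\to(X,d)$ with $f_*[Y]=[X^n]$. Replacing $g$ by $\lambda^2 g$ gives $Sc_{\lambda^2 g}\geq\lambda^{-2}\sigma$, while the same map, now read as $f\colon(Y,\lambda^2 g)\to(X,\lambda d)$, has Lipschitz constant $\tfrac{\lambda\, d(f(a),f(b))}{\lambda\, d_g(a,b)}\leq 1$ and still sends $[Y]$ to $[X^n]$. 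Taking the supremum over $\sigma<\kappa$ yields $Sc^{max}([X^n])\geq\lambda^{-2}\kappa$ for the scaled metric $\lambda d$.

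For condition (2), given $R'>\lambda\tfrac{\sqrt2}{\sqrt\kappa}$, I set $R:=R'/\lambda>\tfrac{\sqrt2}{\sqrt\kappa}$ and take the $\epsilon_R$ provided by the hypothesis, so that $\mathcal{H}^n_d(B^d_x(\epsilon_R))<vol_{S\times E}(B_{x''}(\epsilon_R))$ in $S^2(R)\times\mathbf{R}^{n-2}$ for all $x,x''$. Choosing $\epsilon'_{R'}:=\lambda\epsilon_R$, the ball $B^{\lambda d}_x(\lambda\epsilon_R)$ coincides with $B^d_x(\epsilon_R)$, so the left-hand side becomes $\lambda^n\mathcal{H}^n_d(B^d_x(\epsilon_R))$; on the right, a $(\lambda\epsilon_R)$-ball in $S^2(R')\times\mathbf{R}^{n-2}=\lambda\cdot(S^2(R)\times\mathbf{R}^{n-2})$ has $n$-volume $\lambda^n vol_{S\times E}(B_{x''}(\epsilon_R))$. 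Multiplying the hypothesis inequality by $\lambda^n$ then produces exactly the required local volume comparison of $\lambda X^n$ against $S^2(R')\times\mathbf{R}^{n-2}$ for every admissible $R'$.

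I do not expect a genuine obstacle here: the content is entirely bookkeeping of scaling exponents. The only point demanding care is to confirm that all three scalings --- of $\mathcal{H}^n$, of $Sc$, and of the product comparison space, including the self-similarity of the Euclidean factor $\mathbf{R}^{n-2}$ --- carry the same exponent conventions, so that the factors of $\lambda^n$ cancel cleanly on the two sides of the volume comparison and the curvature threshold $\sqrt2/\sqrt\kappa$ transforms into $\lambda\sqrt2/\sqrt\kappa$ in lockstep with $\kappa\mapsto\lambda^{-2}\kappa$.
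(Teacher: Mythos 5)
Your proof is correct and follows essentially the same route as the paper, whose entire argument is the two scaling identities $\mathcal{H}^n_{\lambda d}=\lambda^n\mathcal{H}^n_d$ and $Sc(\lambda^2 g)=\lambda^{-2}Sc(g)$; your verification of conditions (1) and (2), including the self-similarity $\lambda\cdot(S^2(R)\times\mathbf{R}^{n-2})\cong S^2(\lambda R)\times\mathbf{R}^{n-2}$ and the matching of the threshold $R>\sqrt{2}/\sqrt{\kappa}$ with $\kappa\mapsto\lambda^{-2}\kappa$, simply makes explicit the bookkeeping the paper leaves to the reader.
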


\begin{proof}
If we scale $d$ by $\lambda\neq 0$, then  $\mathcal{H}^n_{\lambda d}=\lambda^n \mathcal{H}^n_d$. Combining it with the fact that the $Sc(\lambda^2g)=\lambda^{-2}Sc(g)$ for a smooth Riemannian metric will complete the proof.
\end{proof}

Let $\pi: \hat{X}^n\to X^n$ be a  covering map, then the length metric $d$ on $X^n$ is lifted to a unique length metric $\hat{d}$ such that the covering map is a local isometry.  Hence $dim_H(X,d)=dim_H(\hat{X},\hat{d})$ for a finite connected cover of $X$. Then we endow  the lifting length metric on a finite cover of $(X^n, d, \mathcal{H}^n_d)$ such that  $(\hat{X}^n,\hat{d}, \mathcal{H}^n_{\hat{d}})$ is a metric measure space.

\begin{proposition}\label{cover}
Assume  $(X^n, d,   \mathcal{H}^n_d)$ satisfies   $Sc^{MV}(X^n)\geq \kappa>0$ and
 $\hat{X}^n$   is a finite connected cover of $X^n$,    then    $Sc^{MV}(\hat{X}^n)\geq \kappa$ for $(\hat{X}^n,\hat{d}, \mathcal{H}^n_{\hat{d}})$.
\end{proposition}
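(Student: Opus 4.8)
The plan is to verify directly that the two defining conditions of $Sc^{MV}(\hat{X}^n)\geq\kappa$ hold for the metric measure space $(\hat{X}^n,\hat{d},\mathcal{H}^n_{\hat{d}})$, exploiting throughout that, with the lifted length metric $\hat{d}$, the covering projection $\pi\colon\hat{X}^n\to X^n$ is a local isometry (hence in particular $1$-Lipschitz and locally $\mathcal{H}^n$-preserving), and that $\hat{X}^n$ is again compact of Hausdorff dimension $n$. Write $k$ for the number of sheets of $\pi$.

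For condition (1), i.e. $Sc^{max}([\hat{X}^n])\geq\kappa$, I would transport competitors upstairs by pulling back the cover. Fix $\sigma<\kappa$; by $Sc^{max}([X^n])\geq\kappa$ there is a closed oriented Riemannian $n$-manifold $Y$ with $Sc(Y)\geq\sigma$ and a $1$-Lipschitz map $f\colon Y\to X^n$ of degree one, $f_*[Y]=[X^n]$. Form the pullback covering $\hat{Y}:=f^*\hat{X}^n\to Y$, a $k$-sheeted cover, and equip $\hat{Y}$ with the Riemannian metric pulled back from $Y$ along the (local diffeomorphism) projection $\pi_Y\colon\hat{Y}\to Y$, so that $Sc(\hat{Y})\geq\sigma$ and $\hat{Y}$ is closed and oriented. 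Since a degree-one map is surjective on fundamental groups, $f_*\pi_1(Y)=\pi_1(X^n)$, whence the pullback $\hat{Y}$ is connected. Let $\hat{f}\colon\hat{Y}\to\hat{X}^n$ be the canonical map, characterised by $\pi\circ\hat{f}=f\circ\pi_Y$. A path-length computation as in the proof of Property \ref{Enlargabel metric} shows $\mathrm{Lip}(\hat{f})\leq\mathrm{Lip}(f)\leq 1$: for any curve $\gamma$ in $\hat{Y}$ one has $\ell(\hat{f}\circ\gamma)=\ell(\pi\circ\hat{f}\circ\gamma)=\ell(f\circ\pi_Y\circ\gamma)\leq\ell(\pi_Y\circ\gamma)=\ell(\gamma)$, since $\pi$ and $\pi_Y$ are local isometries and $f$ is $1$-Lipschitz. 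Finally, writing $\hat{f}_*[\hat{Y}]=m[\hat{X}^n]$ and applying $\pi_*$, the relations $\pi\circ\hat{f}=f\circ\pi_Y$, $\pi_{Y*}[\hat{Y}]=k[Y]$ and $\pi_*[\hat{X}^n]=k[X^n]$ give $mk[X^n]=k[X^n]$, so $m=1$ and $\hat{f}_*[\hat{Y}]=[\hat{X}^n]$. Thus $\hat{f}$ witnesses $Sc^{max}([\hat{X}^n])\geq\sigma$, and letting $\sigma\uparrow\kappa$ yields condition (1).

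For condition (2), I would use that the volume comparison is a statement about arbitrarily small balls. By compactness of $\hat{X}^n$ and the local isometry property there is $\rho>0$ such that $\pi$ restricts to a measure-preserving isometry of $B_{\hat{x}}(\epsilon)$ onto $B_{\pi(\hat{x})}(\epsilon)$ for every $\hat{x}\in\hat{X}^n$ and every $\epsilon\leq\rho$; hence $\mathcal{H}^n_{\hat{d}}(B_{\hat{x}}(\epsilon))=\mathcal{H}^n_{d}(B_{\pi(\hat{x})}(\epsilon))$ for such $\epsilon$. Given $R>\tfrac{\sqrt{2}}{\sqrt{\kappa}}$, condition (2) for $X^n$ provides a radius $\epsilon_R$ at which all $X$-balls are volume-wise smaller than the (homogeneous) $S^2(R)\times\mathbf{R}^{n-2}$-balls; since this inequality persists for all radii $\leq\epsilon_R$, I set $\hat{\epsilon}_R:=\min(\epsilon_R,\rho)$ and obtain, for every $\hat{x}\in\hat{X}^n$ and every $x'\in S^2(R)\times\mathbf{R}^{n-2}$, the estimate $\mathcal{H}^n_{\hat{d}}(B_{\hat{x}}(\hat{\epsilon}_R))=\mathcal{H}^n_{d}(B_{\pi(\hat{x})}(\hat{\epsilon}_R))<vol_{S\times E}(B_{x'}(\hat{\epsilon}_R))$. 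This is exactly condition (2) for $\hat{X}^n$, and combining the two conditions gives $Sc^{MV}(\hat{X}^n)\geq\kappa$.

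The main obstacle is the verification of condition (1): everything hinges on producing, from a degree-one competitor for $X^n$, a genuinely degree-one $1$-Lipschitz competitor for $\hat{X}^n$. The construction must use the pullback covering rather than the manifold $Y$ itself (which only maps to $X^n$), and the crux is the homological bookkeeping $\hat{f}_*[\hat{Y}]=[\hat{X}^n]$, for which connectedness of the pullback---guaranteed by the $\pi_1$-surjectivity of the degree-one map $f$---is essential; were $\hat{Y}$ disconnected, no single component would map with degree one. Condition (2) is comparatively routine, the only point requiring care being that the comparison radius be taken below the local-isometry radius $\rho$ of the cover, which is where I use that the volume comparison holds for all sufficiently small balls rather than for a single radius.
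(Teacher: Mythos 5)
Your proof is correct and follows essentially the same route as the paper's: for condition (1) the paper likewise pulls back the finite cover along the $1$-Lipschitz degree-one competitor, lifts $f$ to a $1$-Lipschitz map into $\hat{X}^n$, and arranges that the lifted fundamental class maps to $[\hat{X}^n]$ (the paper via wrong-way maps, where you instead compute the degree directly by pushing forward with $\pi_*$ and counting sheets to get $m=1$), and for condition (2) it likewise transfers the ball-volume comparison through the local isometry of the covering. You in fact supply details the paper elides---connectedness of the pullback cover via $\pi_1$-surjectivity of degree-one maps, the explicit Lipschitz estimate by curve lengths, and the degree bookkeeping---while your one unargued step, that the volume comparison at radius $\epsilon_R$ persists at all smaller radii so that one may pass below the local-isometry radius $\rho$, is exactly the reading of condition (2) that the paper's own one-sentence treatment of this point tacitly assumes.
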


\begin{proof}
As $(\hat{X}^n,\hat{d}, \mathcal{H}^n_{\hat{d}})$ is locally isometric to $(X^n, d,   \mathcal{H}^n_d)$ and  $Sc^{MV}(X^n)\geq \kappa>0$, $(\hat{X}^n,\hat{d}, \mathcal{H}^n_{\hat{d}})$ is also  locally volume-wise smaller than  $S^2(R)\times \mathbf{R}^{n-2}$.

Let $(M^n,g)$ be the closed orientable Riemannian manifold with $Sc(g)\geq \kappa$ such that $f:M^n\to X^n$ is $1$-Lipschitz map and $f_*([M^n])=[X^n]$. Then $f^*\hat{X}^n$ is a finite cover of $M^n$ and we denote it by $\hat{M}^n$, i.e.  $\hat{M}^n:=f^*\hat{X}^n$.  Then the Lipshicht constant of  $\hat{f}: (\hat{M}^n, \hat{g})\to (\hat{X}^n, \hat{d})$ is 1, where $\hat{g}$ is the lifting metric of $g$. Then we have the following two commutative diagrams.

 \begin{equation*}
 \xymatrix{\hat{M}^n \ar@{-->}[d]_{\pi'} \ar@{-->}[r]^{\hat{f}} & \hat{X}^n\ar[d]^{\pi} \\                                       
          M^n\ar[r]_f & X^n }                  \quad\quad        \xymatrix{H_n({\hat{M}^n};\mathbf{Z}) \ar@{-->}[d]_{\pi'_*} \ar@{-->}[r]^{\hat{f}_*} & H_n({\hat{X}^n};\mathbf{Z})\ar[d]^{\pi_*} \\                                       
          H_n({M^n};\mathbf{Z})\ar[r]_{f_*} & H_n({X^n};\mathbf{Z}) }                            
        \end{equation*}

    Using the wrong-way map, we can map  $[X^n]$ to $ H_n(\hat{X}^n;\mathbf{Z})$, denoting it by $[\hat{X}^n]$ and then we choose $[\hat{X}^n]$ as the fundamental class  of $\hat{X}^n$.   Again, we map $[\hat{X}^n]$ to $ H_n(\hat{M}^n;\mathbf{Z})$, denoting it by $[\hat{M}^n]$ and  choose $[\hat{M}^n]$ as the fundamental class  of $\hat{M}^n$, i.e.  $\hat{f}_*([\hat{M}^n])=[\hat{X}^n]$.
    
     Thus,  $Sc^{MV}(\hat{X}^n)\geq \kappa$. 
\end{proof}

\begin{remark}
One can also define $Sc^{MV}\geq\kappa>0$ for closed non-orientable  topological manifolds by requiring that the double cover with the induced metric satisfies   $Sc^{MV}\geq\kappa>0$, since the Hausdorff dimensions are equal to each other in this case.
\end{remark}
 
\begin{proposition}[ Weak SLB Theorem]\label{Weak SLB}
 Assume $(X^n, d,  \mathcal{H}^n_d)$ $(n\leq 8)$ satisfies  $Sc^{MV}(X^n)\geq \kappa>0$,  then for all continuous non-zero degree maps $h$ from $X^n$ to the sphere $S^n$,  it holds that $\mathrm{Lip}(h)>\frac{C\sqrt{n-1}}{\sqrt{\kappa}\pi}$.
Here $S^n$ is endowed with the standard round metric $d_{S^n}$ and  $C>\frac{1}{3}$.

\end{proposition}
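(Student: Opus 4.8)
The plan is to reduce the statement to Gromov's SLB Theorem by transporting the problem from the possibly non-smooth metric measure space $(X^n, d, \mathcal{H}^n_d)$ back to a genuine Riemannian manifold supplied by the hypothesis $Sc^{MV}(X^n)\geq\kappa$. Only condition (1), i.e. $Sc^{max}([X^n])\geq\kappa$, enters the argument; the volume comparison in condition (2) is not needed for this Lipschitz estimate. Unravelling the definition of $Sc^{max}$ as a supremum, for every $\sigma$ with $0<\sigma<\kappa$ there exists a closed oriented Riemannian $n$-manifold $(M^n,g)$ (depending on $\sigma$) with $Sc(g)\geq\sigma$ together with a $1$-Lipschitz map $f\colon (M^n,d_g)\to (X^n,d)$ satisfying $f_*[M^n]=[X^n]$.

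First I would form the composition $\phi:=h\circ f\colon M^n\to S^n$. Since $f_*[M^n]=[X^n]$ and $h_*[X^n]=\deg(h)\,[S^n]$ with $\deg(h)\neq 0$, functoriality of the pushforward gives $\phi_*[M^n]=\deg(h)\,[S^n]$, so $\phi$ has non-zero degree; as $M^n$ is closed, $\phi$ is trivially constant at infinity, and $\mathrm{Lip}(\phi)\leq \mathrm{Lip}(f)\,\mathrm{Lip}(h)\leq \mathrm{Lip}(h)$. Thus $\phi$ is precisely the kind of map to which the SLB Theorem applies, once we normalize the scalar curvature. I would rescale the metric to $g':=\tfrac{\sigma}{n(n-1)}\,g$; by the scaling law $Sc(\lambda^2 g)=\lambda^{-2}Sc(g)$ this gives $Sc(g')\geq n(n-1)$, while distances rescale as $d_{g'}=\sqrt{\sigma/(n(n-1))}\,d_g$. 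Applying the SLB Theorem to $(M^n,g')$ and $\phi$ (legitimate since $2\leq n\leq 8$) yields $\mathrm{Lip}_{g'}(\phi)>\tfrac{C}{\sqrt{n}\,\pi}$ with $C>\tfrac13$.

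Converting back through the distance rescaling turns this into a lower bound of the form $\mathrm{Lip}(h)\geq \mathrm{Lip}_{g}(\phi)>(\text{const})\cdot\sqrt{\sigma}$, where the constant is read off from $\tfrac{C}{\sqrt n\,\pi}$ and the factor $\sqrt{\sigma/(n(n-1))}$; letting $\sigma\uparrow\kappa$ then produces the asserted bound. The main obstacle I anticipate is bookkeeping rather than conceptual: one must keep the direction of every rescaling straight (scalar curvature scales inversely to the metric, distances as its square root) so that the final constant comes out in the stated form, and one must justify passing from the family of strict inequalities indexed by $\sigma<\kappa$ to a single strict inequality at $\sigma=\kappa$. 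Here the fact that $C$ may be taken strictly above $\tfrac13$ uniformly in $\sigma$ is what rescues strictness in the limit. A secondary point to check is that the witnessing manifold may be taken connected (or that the component carrying the fundamental class is used), so that the degree of $\phi$ and the hypotheses of the SLB Theorem are unambiguous.
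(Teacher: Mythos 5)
Your strategy coincides with the paper's: use condition (1) to extract a closed oriented witness $(M^n,g)$ with $Sc(g)\geq\sigma$ ($\sigma$ close to $\kappa$; the paper writes $\kappa-\epsilon$) and a $1$-Lipschitz map $f$ with $f_*[M^n]=[X^n]$, compose with $h$, rescale so that Gromov's SLB Theorem applies, and let $\sigma\uparrow\kappa$. Up to the last line your bookkeeping is carried out correctly: with $g'=\frac{\sigma}{n(n-1)}g$ one indeed gets $Sc(g')\geq n(n-1)$ and $d_{g'}=\sqrt{\sigma/(n(n-1))}\,d_g$. But your closing step --- ``letting $\sigma\uparrow\kappa$ then produces the asserted bound'' --- fails. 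Carrying your rescaling through: $\mathrm{Lip}_{g'}(\phi)=\sqrt{n(n-1)/\sigma}\,\mathrm{Lip}_g(\phi)>\frac{C}{\sqrt{n}\,\pi}$ yields $\mathrm{Lip}(h)\geq\mathrm{Lip}_g(\phi)>\sqrt{\frac{\sigma}{n(n-1)}}\cdot\frac{C}{\sqrt{n}\,\pi}=\frac{C\sqrt{\sigma}}{n\sqrt{n-1}\,\pi}$, hence in the limit a lower bound proportional to $\sqrt{\kappa}$, whereas the proposition asserts $\frac{C\sqrt{n-1}}{\sqrt{\kappa}\,\pi}$, proportional to $1/\sqrt{\kappa}$. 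The two expressions agree only when $\kappa=n(n-1)$, so no care with the limit rescues the identification: your argument proves a genuinely different inequality from the one stated, and you flagged exactly this rescaling bookkeeping as the main risk before waving it through.

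For comparison, the paper's own proof reaches the printed constant by choosing $\lambda=\sqrt{\frac{n(n-1)}{\kappa-\epsilon}}$ and claiming $Sc(\lambda^2 g)\geq n(n-1)$; but $Sc(\lambda^2 g)=\lambda^{-2}Sc(g)=\frac{\kappa-\epsilon}{n(n-1)}Sc(g)$, so that claim fails whenever $\kappa-\epsilon<n(n-1)$ --- the paper's $\lambda$ is the reciprocal of the correct normalization, which is the one you used, and it is precisely this inversion that generates the stated form $\frac{C\sqrt{n-1}}{\sqrt{\kappa-\epsilon}\,\pi}$. A consistency check against the paper's own Quadratic Scaling proposition supports your direction: replacing $d$ by $\lambda d$ turns $\kappa$ into $\lambda^{-2}\kappa$ and $\mathrm{Lip}(h)$ into $\lambda^{-1}\mathrm{Lip}(h)$, so any scaling-invariant lower bound must be proportional to $\sqrt{\kappa}$, as yours is. In short: your computation, done honestly, establishes $\mathrm{Lip}(h)>\frac{C\sqrt{\kappa}}{n\sqrt{n-1}\,\pi}$ (presumably the intended statement), but it does not and cannot produce the proposition as printed, and asserting that it does is the gap. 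One further remark: your device of taking $C$ uniformly above $\frac{1}{3}$ to preserve strictness is actually needed in your direction, since $\frac{C\sqrt{\sigma}}{n\sqrt{n-1}\,\pi}$ increases toward its limit; in the paper's (inverted) direction the bound at any fixed $\epsilon$ already exceeds the limiting one, so there strictness is automatic.
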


\begin{proof}

 Given a small $\epsilon>0$,  there exists a closed orientable Riemannian $n$-manifold $(M^n, g)$ with $Sc(g)\geq \kappa-\epsilon>0$ and a 1-Lipschitz and degree one map $f:(M^n,d_g)\to (X^n,d)$  by the definition of max-scalar curvature. Let $h$ be a continuous non-zero degree map $h:(X^n,d)\to (S^n, d_{S^n})$, then
 \begin{align*}
 \mathrm{Lip}(h\circ f)\leq \mathrm{Lip}(h)\times\mathrm{Lip}(f)
\end{align*}  
  induces $\mathrm{Lip}(h)\geq \mathrm{Lip}(h\circ f)$.
 
If one scalars the metrics $d_g$ and $d$ by  a constant $\lambda\neq 0$,   then one has  $Sc(\lambda^2g)=\lambda^{-2}Sc(g)$, $d_{\lambda^2g}=\lambda d_g$. The new maps are denoted by $\tilde{h}: (X^n,\lambda d)\to (S^n,d_{S^n}) $ and $\tilde{f}:(M^n, \lambda d_g)\to (X^n,d)$, then one has 
 $ \mathrm{Lip}(\tilde{f})=\mathrm{Lip}(f)$, $\mathrm{Lip}(\tilde{h}\circ \tilde{f})=\lambda^{-1} \mathrm{Lip}(h\circ f)$ and $ \mathrm{Lip}(\tilde{h})=\lambda^{-1}\mathrm{Lip}(h)$.

 Choose $\lambda=\sqrt{\frac{n(n-1)}{\kappa-\epsilon}}$ such that $Sc(\lambda^2g)\geq n(n-1)$, then SLB Theorem implies that $\mathrm{Lip}(\tilde{h}\circ \tilde{f})>\frac{C}{\sqrt{n}\pi}$. That means 
 \begin{align*}
 \lambda^{-1}\mathrm{Lip}(h)=\mathrm{Lip}(\tilde{h})\geq \mathrm{Lip}(\tilde{h}\circ \tilde{f})>\frac{C}{\sqrt{n}\pi}.
 \end{align*}
 
Thus, we have $\mathrm{Lip}(h)>\frac{C\sqrt{n-1}}{\sqrt{\kappa-\epsilon}\pi}$. Let $\epsilon$ go to 0, then one has $\mathrm{Lip}(h)>\frac{C\sqrt{n-1}}{\sqrt{\kappa}\pi}$.

\end{proof}

Gromov and Lawson define the enlargeability  by allowing only finite coverings in \cite{MR569070} 
Hanke and Schick   \cite{MR2259056}  showed that the Rosenberg index of this kind of  enlargeable  spin manifold does not vanish,  which also implies that the manifold carries no PSC-metrics.

\begin{definition}[Compactly enlargeable length-structures]\label{Compactly}
  A length metric $d$ on a closed orientable n-dimensional   topological manifold $X^n$ is said to be compactly enlargeable if for each $\varepsilon > 0$ there is a finite connected covering manifold $\hat{X}^n$ endowed with the induced metric $\hat{d}$ which is $\varepsilon$-hyperspherical. (This notion is defined at the beginninig of Section 2).
  
  A compactly enlargeable length-structure on $X^n$ is the strongly equivalent class of an enlargeable metric. 
  
  A compactly enlargeable Riemannian length-structure on a closed orientable smooth manifold is  an enlargeable length-structure, which contains   a length metric induced by a Riemannian metric on the manifold.
\end{definition}

Let us recall  Theorem \ref{C} and give a proof yo it.

 \begin{theoremC}
  If $(X^n, d, \mathcal{H}^n_d)$ $(n\leq 8)$ satisfies  $Sc^{MV}(X^n)\geq \kappa>0 $, then $d$ is not in any  compactly enlargeable  length-structures on the closed oriented topological manifold  $X$.
 \end{theoremC}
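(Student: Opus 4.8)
The plan is to argue by contradiction, chaining together the two propositions established immediately beforehand: Proposition \ref{cover}, which propagates the bound $Sc^{MV}\geq\kappa$ to finite connected covers, and the Weak SLB Theorem (Proposition \ref{Weak SLB}), which converts such a bound into a \emph{uniform} positive lower bound on the Lipschitz constant of every degree-nonzero map to $S^n$. The tension is that compact enlargeability supplies, on finite covers, maps to $S^n$ of nonzero degree with \emph{arbitrarily small} Lipschitz constant, and this collides directly with the uniform lower bound coming from Weak SLB.

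First I would record that compact enlargeability is a property of the strong equivalence class, not of a single representative; this is exactly why Definition \ref{Compactly} speaks of a length-structure. Indeed, if $d$ and $d_0$ satisfy $c\,d\leq d_0\leq C\,d$, these inequalities lift to any connected cover as $c\,\hat d\leq \hat{d}_0\leq C\,\hat d$, so from $\hat d\geq \tfrac1C\,\hat{d}_0$ one gets $\mathrm{Lip}_{\hat d}(f)\leq C\,\mathrm{Lip}_{\hat{d}_0}(f)$ for every continuous $f\colon \hat X^n\to S^n$. Hence $\varepsilon$-hypersphericity for $\hat{d}_0$ yields $(C\varepsilon)$-hypersphericity for $\hat d$, and since $\varepsilon$ is arbitrary, $d_0$ compactly enlargeable forces $d$ compactly enlargeable. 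Thus, assuming $d$ lies in a compactly enlargeable length-structure, I may assume $d$ itself is compactly enlargeable in the sense of Definition \ref{Compactly}.

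Next, suppose for contradiction that $Sc^{MV}(X^n)\geq\kappa>0$ holds for $(X^n,d,\mathcal{H}^n_d)$ while $d$ is compactly enlargeable. Fix $\varepsilon>0$. By Definition \ref{Compactly} there is a finite connected cover $\hat{X}^n$ whose induced metric $\hat d$ is $\varepsilon$-hyperspherical; since $\hat{X}^n$ is compact (a finite cover of a closed manifold), this simply means there is a continuous map $h\colon\hat{X}^n\to S^n$ of nonzero degree with $\mathrm{Lip}(h)<\varepsilon$, the ``constant at infinity'' clause being vacuous. Endow $\hat{X}^n$ with the lifted metric measure structure $(\hat{X}^n,\hat d,\mathcal{H}^n_{\hat d})$, which is well-defined because $\mathrm{dim}_H(\hat X,\hat d)=\mathrm{dim}_H(X,d)=n$, and apply Proposition \ref{cover} to conclude $Sc^{MV}(\hat{X}^n)\geq\kappa$.

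Finally I would apply the Weak SLB Theorem to $(\hat{X}^n,\hat d,\mathcal{H}^n_{\hat d})$: every continuous nonzero-degree map from $\hat{X}^n$ to $S^n$, in particular $h$, satisfies $\mathrm{Lip}(h)>\frac{C\sqrt{n-1}}{\sqrt{\kappa}\,\pi}$ with $C>\frac13$. Choosing at the outset $\varepsilon\leq \frac{C\sqrt{n-1}}{\sqrt{\kappa}\,\pi}$ contradicts $\mathrm{Lip}(h)<\varepsilon$, which completes the argument. The proof is a direct composition of the two preceding propositions, so I do not anticipate a serious obstacle; the only genuine care lies in the bookkeeping of the reduction step—verifying that strong equivalence preserves compact enlargeability and that the Lipschitz constants are consistently measured against the lifted metric $\hat d$—so that the uniform lower bound from Weak SLB and the vanishingly small constant from enlargeability are compared on one and the same covering space. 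The substantive analytic content is already carried by the SLB Theorem and by Proposition \ref{cover}.
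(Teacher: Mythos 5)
Your proposal is correct and takes essentially the same route as the paper's proof: a contradiction obtained by combining Proposition \ref{cover} (passing $Sc^{MV}\geq\kappa$ to the finite connected cover) with the Weak SLB Theorem (Proposition \ref{Weak SLB}), the strong-equivalence constants being absorbed into the Lipschitz bound so that an $\varepsilon$-hyperspherical cover violates the uniform lower bound $\frac{C\sqrt{n-1}}{\sqrt{\kappa}\pi}$. The only cosmetic difference is bookkeeping: you first show compact enlargeability transfers from a representative $d_0$ to $d$ itself and then work with $\hat d$ alone, whereas the paper keeps the enlargeable representative $d'$ and compares $\mathrm{Lip}(h)_{\hat d}$ with $\mathrm{Lip}(h)_{\hat{d'}}$ directly on the cover.
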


\begin{proof}
 The argument is the same in the proof of Theorem \ref{A} .   Assume $d$ is in a compactly enlargeable length-structure, then  there exists a compactly enlargeable metric $d'$ on $X^n$ such that $\alpha_1d\leq d'\leq \alpha_2d$ for some $0<\alpha_1\leq \alpha_2$.   There exists a finite connected covering manifold $\hat{X}^n$ such that the induced metric $\hat{d'})$ is $\varepsilon$-hyperspherical for $ \varepsilon<\frac{C\sqrt{n-1}}{\sqrt{\kappa}\alpha_2\pi}$ by the definition of compactly enlargeable metric. That means it exists the non-zero degree map $h: (\hat{X},\hat{d'})\to S^n$ such that $\mathrm{Lip}(h)_{\hat{d'}}< \frac{C\sqrt{n-1}}{\sqrt{\kappa}\alpha_2\pi}$, where $\mathrm{Lip}(h)_{\hat{d'}}$ is the Lipschitz constant with respect the metric $\hat{d'}$. 
 
 On the other hand, we have 
 \begin{align*}
 \alpha^{-1}_2\mathrm{Lip}(h)_{\hat{d}}\leq \mathrm{Lip}(h)_{\hat{d'}} \leq \alpha^{-1}_1\mathrm{Lip}(h)_{\hat{d}}.
 \end{align*}
 But Proposition \ref{cover} shows that $Sc^{MV}(\hat{X}^n, \hat{d})\geq \kappa$ and then   Weak SLB Theorem \ref{Weak SLB}
shows that $\mathrm{Lip}(h)_{\hat{d}}>\frac{C\sqrt{n-1}}{\sqrt{\kappa}\pi}$, which is a contradiction. 

One can also prove the proposition by Corollary \ref{Enlargeable Riemannn} and the definition of max-scalar curvature.  If $d$ is in a compactly enlargeable length-structure and $f:(M^n,g)\to (X^n,d)$ is a  degree 1 map and $\mathrm{Lip}(f)\leq 1$, then $g$ is in an enlargeable Riemannian length-metric structure by Corollary \ref{Enlargeable Riemannn}. That means $M^n$ carries no PSC-metrics. Thus, $Sc^{max}([X^n])=0$, which is a contradiction.
\end{proof}

  The  scalar curvature of Riemannian metrics are additivity under Pythagorean-Riemannian products, however, $Sc^{MV}$ may not be additivity under Pythagorean products in general. Note that for the  Pythagorean product of two metric spaces $(X,d)$ and $(Y,d_1)$, $(X\times Y, d_2)$ with $d_2:=\sqrt{d^2+d^2_1}$ ,  we have 
\begin{align*}
 \mathrm{dim}_H(X)+\mathrm{dim}_H(Y)\leq \mathrm{dim}_H(X\times Y)\leq \mathrm{dim}_H(X)+\mathrm{dim}_B(Y),
\end{align*}
  where $\mathrm{dim}_B(Y)$ is the upper box counting dimension of $Y$,  and the inequality may be strict.  If $Y$ is a smooth Riemannian manifold, then $dim_H(Y)=dim_B(Y)$.  Furthermore, we  have 
  \begin{align*}
   \mathcal{H}^{n+m}_{d_2}\geq C(n,m)\mathcal{H}^n_d\times \mathcal{H}^m_{d_1}
  \end{align*}
  where $C(n,m)$ is a constant dependent only on $n$ and $m$ and  $C(n,m)\geq 1$.  $C(n,m)$ may be greater than 1, but  if $X$ and $Y$ are rectifiable Borel subsets of Euclidean space, then   $C(n,m)=1$ was showed by Federer in \cite[3.2.23~Theorem]{MR0257325}.

\begin{proposition}
 Assume $(X^n\times Y^m, d_2)$ is the  Pythagorean product of $(X^n, d,   \mathcal{H}^n_{d})$ and $(Y^m, d_1,  \mathcal{H}^n_{d_1})$, where $d_2:=\sqrt{d^2+d_1^2}$,   satisfies that  $\mathrm{dim}_H(X^n\times Y^m)=n+m$ and  the measure $  \mathcal{H}^{n+m}_{d_2}:=   \mathcal{H}^{n}_{d} \otimes  \mathcal{H}^{m}_{d_1}$. Then if $Sc^{MV}(X^n)\geq \kappa_1>0$ and $Sc^{MV}(Y^m)\geq \kappa_2>0$,   then $Sc^{MV}(X^n\times Y^m)\geq \kappa_1+\kappa_2$. 

\end{proposition}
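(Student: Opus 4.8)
The plan is to verify the two defining conditions of $Sc^{MV}(X^n\times Y^m)\geq\kappa_1+\kappa_2$ separately, exploiting that both scalar curvature and the model comparison behave additively under \emph{Riemannian} products. For condition (1), fix $\epsilon>0$ and, by the definition of $Sc^{max}$, choose closed oriented Riemannian manifolds $(M^n,g_M)$ and $(P^m,g_P)$ with $Sc(g_M)\geq\kappa_1-\epsilon$, $Sc(g_P)\geq\kappa_2-\epsilon$ and $1$-Lipschitz maps $f\colon M^n\to X^n$, $h\colon P^m\to Y^m$ carrying the fundamental classes. I would form the Riemannian product $(M^n\times P^m, g_M\oplus g_P)$, whose scalar curvature is $Sc(g_M)+Sc(g_P)\geq\kappa_1+\kappa_2-2\epsilon$ and whose Riemannian distance is precisely the Pythagorean combination of the factor distances. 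Hence $f\times h$ is $1$-Lipschitz into $(X\times Y,d_2)$, and by naturality of the homological cross product $(f\times h)_*[M\times P]=f_*[M]\times h_*[P]=[X]\times[Y]=[X\times Y]$. Letting $\epsilon\to 0$ gives $Sc^{max}([X^n\times Y^m])\geq\kappa_1+\kappa_2$, which is condition (1).

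For condition (2), fix $R>\tfrac{\sqrt2}{\sqrt{\kappa_1+\kappa_2}}$, i.e. $\sigma:=2R^{-2}<\kappa_1+\kappa_2$, and split $\sigma=\sigma_1+\sigma_2$ with $\sigma_1<\kappa_1$, $\sigma_2<\kappa_2$; setting $R_i=\sqrt{2/\sigma_i}$ gives $R_i>\tfrac{\sqrt2}{\sqrt{\kappa_i}}$, so the hypotheses on $X$ and $Y$ apply at $R_1,R_2$. The key observation is that the Pythagorean product of the two model spaces $S^2(R_1)\times\mathbf{R}^{n-2}$ and $S^2(R_2)\times\mathbf{R}^{m-2}$, equipped with the product of their (Hausdorff $=$ Riemannian) measures, is exactly the Riemannian manifold $S^2(R_1)\times S^2(R_2)\times\mathbf{R}^{n+m-4}$, whose scalar curvature is $2R_1^{-2}+2R_2^{-2}=\sigma_1+\sigma_2>\sigma=2R^{-2}$, the scalar curvature of the target model $S^2(R)\times\mathbf{R}^{n+m-2}$. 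This reduces condition (2) to two comparisons.

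The first (purely Riemannian) comparison is that, since $\sigma_1+\sigma_2>\sigma$ and both spaces are homogeneous of dimension $n+m$, the small-ball expansion $vol(B(r))=vol_E(B_r)\big[1-\tfrac{Sc}{6(n+m+2)}r^2+O(r^4)\big]$ shows that for all sufficiently small $r$ the balls of $S^2(R_1)\times S^2(R_2)\times\mathbf{R}^{n+m-4}$ are strictly smaller than those of $S^2(R)\times\mathbf{R}^{n+m-2}$. The second comparison bounds the balls of $X\times Y$ by those of the model product: writing $v_x(s)=\mathcal{H}^n_d(B_x(s))$ and $w_y(t)=\mathcal{H}^m_{d_1}(B_y(t))$ and using $\mathcal{H}^{n+m}_{d_2}=\mathcal{H}^n_d\otimes\mathcal{H}^m_{d_1}$ with Fubini, one gets
\[
\mathcal{H}^{n+m}_{d_2}\big(B_{(x,y)}(\epsilon)\big)=\int_0^\epsilon v_x\!\left(\sqrt{\epsilon^2-\rho^2}\right)\,dw_y(\rho),
\]
and the identical formula for the model product with radial profiles $\bar v_1,\bar w_2$. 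Given pointwise bounds $v_x\leq\bar v_1$ and $w_y\leq\bar w_2$ on $[0,\epsilon]$, dominating the integrand and then integrating by parts (the integrand is nonnegative and nonincreasing in $\rho$) yields $\mathcal{H}^{n+m}_{d_2}(B_{(x,y)}(\epsilon))\leq vol_{S^2(R_1)\times S^2(R_2)\times\mathbf{R}^{n+m-4}}(B(\epsilon))$. Chaining the two comparisons and choosing $\epsilon$ small produces the required $\epsilon_R$, proving condition (2) and hence the proposition.

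The hard part is reconciling the single-scale nature of the hypothesis with the range of scales the product formula demands: condition (2) for $X$ and $Y$ literally asserts the volume comparison at one radius $\epsilon_{R_i}$, whereas the Fubini identity integrates $v_x,w_y$ over all radii up to $\epsilon$. I would address this by reading ``locally volume-wise smaller'' as holding for every sufficiently small radius, so that $v_x\leq\bar v_1$ and $w_y\leq\bar w_2$ on an entire interval $[0,\epsilon]$ — the natural meaning of a local volume bound. Making this uniform over all base points $(x,y)$ at once, and checking that a single $\epsilon$ serves every base point simultaneously, is the remaining technical point to secure.
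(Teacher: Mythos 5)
Your proposal follows the same two-step route as the paper's proof: condition (1) via superadditivity of $Sc^{max}$ under Pythagorean products, and condition (2) by chaining the volume comparison through the intermediate model $\bigl(S^2(R_1)\times\mathbf{R}^{n-2}\bigr)\times\bigl(S^2(R_2)\times\mathbf{R}^{m-2}\bigr)$. The differences are in the details, and there your version is mostly tighter. The paper simply asserts $Sc^{max}([X^n]\otimes[Y^m])\geq Sc^{max}([X^n])+Sc^{max}([Y^m])$; your product-of-almost-optimal-representatives argument (additivity of scalar curvature for the Riemannian product, whose distance is exactly the Pythagorean combination, plus naturality of the homological cross product) is precisely the proof of that assertion. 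More substantively, the paper parametrizes the target model by $R_1+R_2$ with $R_i>\sqrt{2}/\sqrt{\kappa_i}$, which literally reaches only $R>\sqrt{2}\bigl(\kappa_1^{-1/2}+\kappa_2^{-1/2}\bigr)$, whereas the definition of $Sc^{MV}\geq\kappa_1+\kappa_2$ demands every $R>\sqrt{2}/\sqrt{\kappa_1+\kappa_2}$; your splitting of $\sigma=2R^{-2}$ covers the whole required range and repairs this. Likewise, the paper justifies the comparison of $X\times Y$ with the product of models solely by the hypothesis $\mathcal{H}^{n+m}_{d_2}=\mathcal{H}^n_d\otimes\mathcal{H}^m_{d_1}$, with no computation; your Fubini/integration-by-parts identity is the honest version of that step, and the radius-range issue you flag is genuinely present in the paper's own argument too: a product ball mixes all radii up to $\epsilon$, while the definition of locally volume-wise smaller supplies a single radius $\epsilon_R$ per $R$. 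Your proposed reading (comparison for all sufficiently small radii) is the one consistent with the paper's Riemannian motivating example, where compactness and the ball-volume expansion give the bound on a whole interval of radii; and uniformity in the base point is free, since the definition quantifies over all centers and the model spaces are homogeneous.

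One slip to fix: having set $\sigma=\sigma_1+\sigma_2$, you claim $2R_1^{-2}+2R_2^{-2}=\sigma_1+\sigma_2>\sigma$, which is false as written (it equals $\sigma$), and with equal scalar curvatures the $r^2$-terms of the two ball expansions coincide, so your first comparison loses its strictness at exactly the order you invoke. The repair is immediate: choose $\sigma_1<\kappa_1$ and $\sigma_2<\kappa_2$ with $\sigma<\sigma_1+\sigma_2<\kappa_1+\kappa_2$, which is possible since $\sigma<\kappa_1+\kappa_2$ (e.g. $\sigma_i=\kappa_i-\delta$ with $0<2\delta<\kappa_1+\kappa_2-\sigma$). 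With that change, and under your stated reading of the volume condition, the argument goes through.
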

 
 \begin{proof}
  Since $(X^n\times Y^m, d_2)$  is locally  volume-wise smaller than $(S^2(R_1)\times \mathbf{R}^{n-2})\times (S^2(R_2)\times \mathbf{R}^{m-2}) $ for $R_1>\frac{\sqrt{2}}{\sqrt{\kappa_1}}$ and $R_2>\frac{\sqrt{2}}{\sqrt{\kappa_2}}$,  since $ \mathcal{H}^{n+m}_{d_2}:=   \mathcal{H}^{n}_{d} \otimes   \mathcal{H}^{m}_{d_1}$. And  $(S^2(R_1)\times \mathbf{R}^{n-2})\times (S^2(R_2)\times \mathbf{R}^{m-2}) $ is locally  volume-wise smaller than $S^2(R_1+R_2)\times \mathbf{R}^{n+m-2}$ for $R_1+R_2>\frac{\sqrt{2}}{\sqrt{\kappa_1+\kappa_2}}$. Thus,  we have $(X^n\times Y^m, d_2)$  is locally  volume-wise smaller than $S^2(R_1+R_2)\times \mathbf{R}^{n+m-2}$ for $R_1+R_2>\frac{\sqrt{2}}{\sqrt{\kappa_1+\kappa_2}}$.
And one has
 \begin{align*}
 Sc^{max}([X^n]\otimes[Y^m])\geq Sc^{max}([X^n])+  Sc^{max}([Y^m])
 \end{align*}
 
 Hence $Sc^{max}([X^n]\otimes[M^m])\geq \kappa_1+\kappa_2$. 
 \end{proof}

\begin{question}
Assume that $d$ is a length metric on the  closed topological $n$-manifold $X^n$  such that  $(X^n,d)$ is an Alexandrov space with curvature$\geq \kappa >0$. Do we have $Sc^{MV}([X^n])\geq n(n-1)\kappa$?
\end{question}
Note that an Alexandrov space with curvature$\geq \kappa >0$ satisfies the Bishop-inequality \cite[Theorem~10.6.8]{MR1835418} and then it implies the condition (2) of the definition of  $Sc^{MV}\geq\kappa$.

\bibliographystyle{alpha}
\bibliography{reference}

\begin{thebibliography}{{Loh}18}

\bibitem[ADG97]{MR1470741}
F.~D. Ancel, M.~W. Davis, and C.~R. Guilbault.
\newblock {${\rm CAT}(0)$} reflection manifolds.
\newblock In {\em Geometric topology ({A}thens, {GA}, 1993)}, volume~2 of {\em
  AMS/IP Stud. Adv. Math.}, pages 441--445. Amer. Math. Soc., Providence, RI,
  1997.

\bibitem[AF94]{MR1312678}
C.~S. Aravinda and F.~T. Farrell.
\newblock Rank {$1$} aspherical manifolds which do not support any
  nonpositively curved metric.
\newblock {\em Comm. Anal. Geom.}, 2(1):65--78, 1994.

\bibitem[BBI01]{MR1835418}
Dmitri Burago, Yuri Burago, and Sergei Ivanov.
\newblock {\em A course in metric geometry}, volume~33 of {\em Graduate Studies
  in Mathematics}.
\newblock American Mathematical Society, Providence, RI, 2001.

\bibitem[BGS85]{MR823981}
Werner Ballmann, Mikhael Gromov, and Viktor Schroeder.
\newblock {\em Manifolds of nonpositive curvature}, volume~61 of {\em Progress
  in Mathematics}.
\newblock Birkh\"{a}user Boston, Inc., Boston, MA, 1985.

\bibitem[Bin53]{MR52763}
R.~H. Bing.
\newblock A convex metric with unique segments.
\newblock {\em Proc. Amer. Math. Soc.}, 4:167--174, 1953.

\bibitem[Bur15]{MR3358543}
Annegret~Y. Burtscher.
\newblock Length structures on manifolds with continuous {R}iemannian metrics.
\newblock {\em New York J. Math.}, 21:273--296, 2015.

\bibitem[CS21]{MR4232210}
Simone Cecchini and Thomas Schick.
\newblock Enlargeable metrics on nonspin manifolds.
\newblock {\em Proc. Amer. Math. Soc.}, 149(5):2199--2211, 2021.

\bibitem[Dav83]{MR690848}
Michael~W. Davis.
\newblock Groups generated by reflections and aspherical manifolds not covered
  by {E}uclidean space.
\newblock {\em Ann. of Math. (2)}, 117(2):293--324, 1983.

\bibitem[Den21]{MR4210894}
Jialong Deng.
\newblock Curvature-dimension condition meets {G}romov's {$n$}-volumic scalar
  curvature.
\newblock {\em SIGMA Symmetry Integrability Geom. Methods Appl.}, 17:013, 20
  pages, 2021.

\bibitem[DH89]{MR1000373}
Michael~W. Davis and Jean-Claude Hausmann.
\newblock Aspherical manifolds without smooth or {PL} structure.
\newblock In {\em Algebraic topology ({A}rcata, {CA}, 1986)}, volume 1370 of
  {\em Lecture Notes in Math.}, pages 135--142. Springer, Berlin, 1989.

\bibitem[DJ91]{MR1131435}
Michael~W. Davis and Tadeusz Januszkiewicz.
\newblock Hyperbolization of polyhedra.
\newblock {\em J. Differential Geom.}, 34(2):347--388, 1991.

\bibitem[DJL12]{MR2872552}
M.~Davis, T.~Januszkiewicz, and J.-F. Lafont.
\newblock {$4$}-dimensional locally {${\rm CAT}(0)$}-manifolds with no
  {R}iemannian smoothings.
\newblock {\em Duke Math. J.}, 161(1):1--28, 2012.

\bibitem[Fed69]{MR0257325}
Herbert Federer.
\newblock {\em Geometric measure theory}.
\newblock Die Grundlehren der mathematischen Wissenschaften, Band 153.
  Springer-Verlag New York Inc., New York, 1969.

\bibitem[FJ91]{MR1159252}
F.~Thomas Farrell and Lowell~E. Jones.
\newblock Rigidity in geometry and topology.
\newblock In {\em Proceedings of the {I}nternational {C}ongress of
  {M}athematicians, {V}ol. {I}, {II} ({K}yoto, 1990)}, pages 653--663. Math.
  Soc. Japan, Tokyo, 1991.

\bibitem[FJ93]{MR1216623}
F.~T. Farrell and L.~E. Jones.
\newblock Topological rigidity for compact non-positively curved manifolds.
\newblock In {\em Differential geometry: {R}iemannian geometry ({L}os
  {A}ngeles, {CA}, 1990)}, volume~54 of {\em Proc. Sympos. Pure Math.}, pages
  229--274. Amer. Math. Soc., Providence, RI, 1993.

\bibitem[GL80]{MR569070}
Mikhael Gromov and H.~Blaine Lawson, Jr.
\newblock Spin and scalar curvature in the presence of a fundamental group.
  {I}.
\newblock {\em Ann. of Math. (2)}, 111(2):209--230, 1980.

\bibitem[GL83]{MR720933}
Mikhael Gromov and H.~Blaine Lawson, Jr.
\newblock Positive scalar curvature and the {D}irac operator on complete
  {R}iemannian manifolds.
\newblock {\em Inst. Hautes \'{E}tudes Sci. Publ. Math.}, (58):83--196 (1984),
  1983.

\bibitem[Gro17]{Gromov}
Misha Gromov.
\newblock {101 Questions, Problems and Conjectures around Scalar Curvature},
  2017.
\newblock
  \url{https://www.ihes.fr/~gromov/wp-content/uploads/2018/08/101-problemsOct1-2017.pdf}.

\bibitem[Gro18]{MR3816521}
Misha Gromov.
\newblock Metric inequalities with scalar curvature.
\newblock {\em Geom. Funct. Anal.}, 28(3):645--726, 2018.

\bibitem[{Gro}19]{2019arXiv190810612G}
Misha {Gromov}.
\newblock {Four Lectures on Scalar Curvature}.
\newblock {\em arXiv e-prints}, page arXiv:1908.10612, Aug 2019.

\bibitem[Hit74]{MR358873}
Nigel Hitchin.
\newblock Harmonic spinors.
\newblock {\em Advances in Math.}, 14:1--55, 1974.

\bibitem[HS06]{MR2259056}
B.~Hanke and T.~Schick.
\newblock Enlargeability and index theory.
\newblock {\em J. Differential Geom.}, 74(2):293--320, 2006.

\bibitem[KS03]{MR1998480}
Gennadi Kasparov and Georges Skandalis.
\newblock Groups acting properly on ``bolic'' spaces and the {N}ovikov
  conjecture.
\newblock {\em Ann. of Math. (2)}, 158(1):165--206, 2003.

\bibitem[KW75]{MR365409}
Jerry~L. Kazdan and F.~W. Warner.
\newblock Scalar curvature and conformal deformation of {R}iemannian structure.
\newblock {\em J. Differential Geometry}, 10:113--134, 1975.

\bibitem[Lla98]{MR1600027}
Marcelo Llarull.
\newblock Sharp estimates and the {D}irac operator.
\newblock {\em Math. Ann.}, 310(1):55--71, 1998.

\bibitem[{Loh}18]{2018arXiv181211839L}
Joachim {Lohkamp}.
\newblock {Contracting Maps and Scalar Curvature}.
\newblock {\em arXiv e-prints}, page arXiv:1812.11839, Dec 2018.

\bibitem[LY72]{MR334083}
H.~Blaine Lawson, Jr. and Shing~Tung Yau.
\newblock Compact manifolds of nonpositive curvature.
\newblock {\em J. Differential Geometry}, 7:211--228, 1972.

\bibitem[Osa20]{MR4176066}
Damian Osajda.
\newblock Small cancellation labellings of some infinite graphs and
  applications.
\newblock {\em Acta Math.}, 225(1):159--191, 2020.

\bibitem[Sap14]{MR3110794}
Mark Sapir.
\newblock A {H}igman embedding preserving asphericity.
\newblock {\em J. Amer. Math. Soc.}, 27(1):1--42, 2014.

\bibitem[SY17]{2017arXiv170405490S}
Richard {Schoen} and Shing-Tung {Yau}.
\newblock {Positive Scalar Curvature and Minimal Hypersurface Singularities}.
\newblock {\em arXiv e-prints}, page arXiv:1704.05490, Apr 2017.

\bibitem[Ver18]{MR3884658}
Giona Veronelli.
\newblock Scalar curvature via local extent.
\newblock {\em Anal. Geom. Metr. Spaces}, 6(1):146--164, 2018.

\end{thebibliography}
\Addresses

\end{document}